\documentclass[a4paper,11pt]{amsart}
\usepackage{amssymb,amsthm,bbm}

\DeclareMathOperator{\GL}{\mathrm{GL}}
\DeclareMathOperator{\Irr}{\mathrm{Irr}}
\DeclareMathOperator{\Ker}{\mathrm{Ker}}
\DeclareMathOperator{\Hom}{\mathrm{Hom}}

\DeclareMathOperator{\tr}{\mathrm{tr}}
\DeclareMathOperator{\dd}{\mathrm{d}}
\DeclareMathOperator{\End}{\mathrm{End}}

\DeclareMathOperator{\gr}{\mathrm{gr}}

\def\ii{\mathrm{i}}
\def\dd{\mathrm{d}}
\def\Id{\mathrm{Id}}
\def\C{\mathbbm{C}}

\def\Q{\mathbbm{Q}}
\def\Z{\mathbbm{Z}}
\def\la{\lambda}
\def\om{\omega}

\newtheorem{prop}{Proposition}[section]
\newtheorem{lemma}[prop]{Lemma}

\newtheorem{theo}[prop]{Theorem}
\newtheorem{quest}[prop]{Question}
\DeclareMathOperator{\Vect}{\mathrm{Vect}}

\def\un{\mathbbm{1}}
\newcommand{\into}{\hookrightarrow}
\newcommand{\onto}{\twoheadrightarrow}

\title{Knizhnik-Zamolodchikov bundles are topologically trivial}
\author{Ivan Marin}
\date{September 14, 2008}
\begin{document}

\maketitle

\bigskip
\begin{center}
Institut de Math\'ematiques de Jussieu \\
Universit\'e Paris 7 \\
175 rue du Chevaleret \\
F-75013 Paris
\end{center}
\bigskip

\noindent {\bf Abstract.} We prove that the vector bundles at the core of the
Knizhnik-Zamolodchikov and quantum constructions of braid groups
representations are topologically trivial bundles. We provide
partial generalizations of this result to generalized braid groups.
A crucial intermediate result is that the representation ring of the
symmetric group on $n$ letters is generated by the alternating powers of its natural
$n$-dimensional representation.
\bigskip

\noindent {\bf Keywords.}  Knizhnik-Zamolodchikov, braid groups, reflection groups, K-theory.

\bigskip

\noindent {\bf MSC 2000.} 57R22, 20F36, 20C30.
%20C30 Representations of finite symmetric groups
%20F36 Braid groups, Artin groups
%57R22 Topology of vector bundles and fiber bundles

\section{Introduction}

The braid group $\mathcal{B}_n$ on $n$ strands can be realized
as the fundamental group of  $\C_*^n/\mathfrak{S}_n$, where
$\C_*^n = \{ (z_1,\dots,z_n) \in \C^n \ | \ i \neq j \Rightarrow
z_i \neq z_j \}$ and $\mathfrak{S}_n$ is the symmetric group
on $n$ letters, acting on $\C^n$ by permutation of coordinates.
Representations of the braid groups appear in a number of ways,
notably in the study of quantum groups and low-dimensional topology.

A well-known way to obtain and study these representations is
to consider them, when possible, as the monodromy of a flat
connection on (complex) vector bundles of the form $(\C_*^n \times U \to \C_*^n) /\mathfrak{S}_n$,
where $U$ is a finite-dimensional vector space equipped with a linear
action of $\mathfrak{S}_n$, $\C_*^n \times U \to \C_*^n$
denotes the corresponding trivial vector bundle over $\C_*^n$,
and $\mathfrak{S}_n$ acts on $\C_*^n \times U$ by diagonal
action. Examples include, but are not restricted to,
the case when $U = A^{\otimes n}$ is
the $n$-times tensor power of some Lie algebra representation
$A$ and $\mathfrak{S}_n$ acts on $A^{\otimes n}$ by permuting the
tensor factors.

A natural question is whether the \emph{quotient} vector bundles themselves are
or can be topologically trivial. We call such quotient bundles
\emph{Knizhnik-Zamolodchikov bundles}, or KZ-bundles for short. The primary purpose of this work is to answer (positively) to this question. 

\begin{theo} \label{theoSN} Every Knizhnik-Zamolodchikov bundle is topologically
trivial.
\end{theo}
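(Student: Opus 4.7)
My strategy combines an explicit trivialization of a single distinguished bundle, a ring-theoretic reduction via the representation ring $R(\mathfrak{S}_n)$, and a dimension argument to upgrade stable triviality to honest triviality.

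First I would single out the natural $n$-dimensional permutation representation $V = \C^n$ of $\mathfrak{S}_n$ and show that the associated KZ-bundle $E_V := (\C_*^n \times V)/\mathfrak{S}_n$ is trivial via \emph{Vandermonde sections}: for $j = 1, \dots, n$, the assignment $\{z_1,\dots,z_n\} \mapsto (z_1^{j-1},\dots,z_n^{j-1})$ descends to a well-defined global section $s_j$ of $E_V$, and at each point of $X := \C_*^n/\mathfrak{S}_n$ the matrix with columns $s_1,\dots,s_n$ is the Vandermonde matrix, invertible precisely because the $z_i$ are distinct. Since the KZ construction $U \mapsto E_U$ commutes with direct sums, tensor products, and exterior powers, each $E_{\Lambda^k V} = \Lambda^k E_V$ is trivial, and $U \mapsto [E_U]$ extends to a ring homomorphism $\phi \colon R(\mathfrak{S}_n) \to K^0(X)$.

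I would then invoke the crucial intermediate result announced in the abstract, namely that the classes $[\Lambda^k V]$ generate $R(\mathfrak{S}_n)$ as a ring. Since $\phi$ sends each generator to the class of a trivial bundle, $\phi([U]) = \dim U \cdot [\un]$ for every representation $U$; equivalently, $E_U$ is stably trivial for every $\mathfrak{S}_n$-representation $U$. To upgrade this to honest triviality I would use that $X$ is a $K(\mathcal{B}_n,1)$; since $\mathcal{B}_n$ is torsion-free of cohomological dimension $n-1$, $X$ has the homotopy type of a CW complex of real dimension $n-1$. On such a complex, the natural map $\Vect^r_\C(X) \to K^0(X)$ is bijective as soon as $2r \geq n-1$, by the standard stable-range result for complex vector bundles. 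An inspection of the character tables of $\mathfrak{S}_n$ confirms that every irreducible representation of dimension $\geq 2$ satisfies this bound (for $n \geq 5$ the smallest non-one-dimensional irreducible has dimension $n-1$), so its KZ-bundle is trivial. The remaining one-dimensional characters --- the trivial and sign representations --- factor through $\mathcal{B}_n \onto \mathcal{B}_n^{ab} = \Z$, and any character of $\Z$ lifts through $\exp \colon \C \to \C^\times$; hence the associated line bundles on $K(\mathcal{B}_n,1)$ are topologically trivial.

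The main obstacle I anticipate is precisely this last step: K-theory only records stable classes, and for low-rank bundles the gap with honest triviality is real. The plan closes it by combining two independent facts --- the stability of complex vector bundles of sufficiently large rank, and the torsion-freeness of $\mathcal{B}_n^{ab}$, which disposes of the one-dimensional exceptions.
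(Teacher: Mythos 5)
Your proposal is correct and follows essentially the same three-step strategy as the paper: an explicit trivialization of the bundle attached to the (permutation/reflection) representation --- your Vandermonde sections are exactly the Arnold--ACC trivialization that the paper rederives via the Jacobian of basic invariants --- then the generation of $R(\mathfrak{S}_n)$ by exterior powers to get stable triviality, the stable-range argument on an $(n-1)$-dimensional complex for irreducibles of dimension at least $2$, and a separate deformation/lifting argument for the two linear characters. The only divergences (cohomological dimension of $\mathcal{B}_n$ in place of Milnor's theorem for the CW dimension, and character-table facts in place of the paper's general lower bound on dimensions) are immaterial.
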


A consequence of the Kohno-Drinfeld theorem is thus that
vector bundles over $\C_*^n/\mathfrak{S}_n$ associated to
the braid groups representation afforded by $R$-matrices
are also topologically trivial.

\medskip

The proof of this theorem goes as follows. 
First note that we may replace $\C_*^n \simeq \C \times V$
by $V$ where $V = \{ (z_1,\dots,z_n ) \in \C_*^n \ | \ z_1 + \dots + z_n= 0 \}$
and the 1-dimensional subspace denoted $\C$ is spanned by the vector $(1,\dots,1)$.
Step (1) is to explicitely
trivialize the bundle when $U = V$. This
trivialization had already been observed in \cite{ACC}, where
the authors used the `Vandermonde spiral' introduced by Arnold in \cite{ARNOLD}.
We give a proof that generalizes this fact to arbitrary reflection groups
(see below), and provides an explanation of the appearance of the
Vandermonde matrix, as a jacobian determinant originating from
invariants theory. 

Step (2) is the elementary remark that irreducible representations of dimension at least 2 of the symmetric
groups $\mathfrak{S}_n$ have actually dimension 
large enough so that the corresponding vector bundles are in the `stable
range', meaning that they are trivial if and only if they are
stably trivial. The problem can thus be reduced to the computation
of the image of the representation ring $R(\mathfrak{S}_n)$
inside the reduced (complex) K-theory of $\C_*^n$.

Step (3) is that $R(\mathfrak{S}_n)$ is generated as a ring by the
representations $\Lambda^k V$ (theorem \ref{theoGenSn} below). As far as we know, this is a new result
that might be applied elsewhere. Since the alternating powers
of a trivial bundle are trivial, this concludes the proof of
the theorem.

\medskip

This problem can be placed in the following larger setting.
Let $V$ be a finite-dimensional complex vector
space and let $W < \GL(V)$ be a finite group generated by (pseudo-)reflections,
i.e. finite-order endomorphisms fixing some hyperplane. Such a group
is called a reflection group.
For convenience, we assume that $W$ acts irreducibly on $V$.
To such a reflection group are associated the arrangement $\mathcal{A}$ of the
hyperplanes fixed by the reflections in $W$, an hyperplane complement
$X = V \setminus \bigcup \mathcal{A}$, (generalized) pure braid group
$P =\pi_1(X)$ and braid group $B = \pi_1(X/W)$. Generalized
KZ-systems are then flat connections on the quotient bundles $(X \times U
\to X)/W$ where $U$ is a finite-dimensional representation of $W$.
This yields the following question.

\begin{quest} Are all generalized KZ-bundles topologically trivial ?
\end{quest}

Following the same demarch as in the proof of the theorem, we prove that
steps (1) and (2) are valid in this more general setting (see propositions \ref{proptrivref} and \ref{propdimn2}). Complications
arise at step (3), because it is no more true in general that the representation
ring $R(W)$ is generated by the exterior powers of the reflection
representations. However, using several ad-hoc means, we manage to
extend the result to several irreducible reflection groups, following their
Shephard-Todd classification (see \cite{ST}) :

\begin{theo} \label{theoW} Let $W$ denote an irreducible reflection group.
Generalized KZ-bundles are topologically trivial whenever
\begin{itemize}
\item $W$ has rank 2
\item $W$ is one of the exceptional groups $G_{23}=H_3$, $G_{24}$, $G_{25}$,
$G_{26}$, $G_{30}=H_4$, $G_{32}$, $G_{33}$,
$G_{35}=E_6$.
\end{itemize} \end{theo}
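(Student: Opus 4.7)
The plan is to follow the three-step scheme of Theorem \ref{theoSN}: step (1), the explicit trivialization when $U=V$, and step (2), the stable-range reduction, remain valid in this generality (Propositions \ref{proptrivref} and \ref{propdimn2}); only step (3), the control of the image of $R(W)$ in reduced K-theory, is non-uniform.

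For the rank-2 case I would bypass step (3) by exploiting the low dimension of the base. As a free finite quotient of the Stein open subset $X \subset \C^2$, the orbit space $X/W$ is itself Stein of complex dimension $2$, so by Andreotti--Frankel it has the homotopy type of a CW-complex of real dimension at most $2$. On such a base, obstruction theory classifies complex vector bundles by their rank and first Chern class $c_1 \in H^2(X/W, \Z)$; since $c_1(E) = c_1(\det E)$, the triviality of any KZ-bundle reduces to $c_1$-triviality of the line bundle $L_\chi$ associated to the character $\chi = \det U$ of $W$. The monodromy of $L_\chi$ is the character $\rho = \chi \circ q : B \to \C^*$, where $q : B \onto W$ is the natural projection, and hence factors through $B^{ab}$. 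For every irreducible rank-2 complex reflection group, $B^{ab}$ is free abelian, with rank equal to the number of $W$-orbits on reflecting hyperplanes; consequently in the exponential exact sequence
\[
H^1(X/W, \C) \longrightarrow H^1(X/W, \C^*) \longrightarrow H^2(X/W, \Z)
\]
the first map is surjective, hence the second vanishes on every character of $B$, and $c_1(L_\chi) = 0$. This settles the rank-2 case uniformly.

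For the listed exceptional groups I would carry out step (3) in its strong form: prove case by case that the representation ring $R(W)$ is generated as a ring by the classes $[\Lambda^k V]$, $0 \leq k \leq \rk W$. Since $V$ is a faithful $W$-module, Burnside's theorem ensures that every irreducible of $W$ appears in some $V^{\otimes n}$, and in $R(\GL(V))$ each $V^{\otimes n}$ is already a $\Z$-polynomial in the $\Lambda^k V$ via Jacobi--Trudi. What remains is to isolate the individual $W$-irreducible summands by decomposing the products $[\Lambda^i V] \cdot [\Lambda^j V]$ using the character tables of the Shephard--Todd groups, and inductively solving for new irreducibles in terms of already-obtained ones. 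The most delicate cases should be the largest groups on the list, namely $G_{35} = E_6$ and $G_{33}$, where $R(W)$ has many more irreducibles than the few exterior powers of $V$ can account for directly; there one may need to enlarge the collection of manifestly K-theoretically-trivial classes by further ad hoc tricks --- such as restriction to reflection subgroups whose representation rings are already controlled by Theorem \ref{theoSN} --- before every irreducible character of $W$ can be expressed in the required form. The intrinsic non-uniformity of step (3) is precisely the reason the theorem is confined to an enumerated list of groups.
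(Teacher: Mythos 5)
Your rank-2 argument is correct, and it is genuinely different from the paper's. The paper handles rank 2 by a representation-theoretic generation statement --- an explicit computation showing $R(G(de,e,2))$ is generated by $V$ and the linear characters, plus character-table checks for $G_4,\dots,G_{22}$ --- and then invokes the stable-range and deformation facts of Proposition \ref{propbase}. You instead exploit that $X/W$ is Stein of complex dimension $2$, so it has the homotopy type of a $2$-complex, where bundles are classified by rank and $c_1$; then $c_1$ of any flat bundle vanishes because it equals $c_1$ of the determinant line bundle, which is the image of a character of $B$ under the connecting map of the exponential sequence, and that map kills all characters since $B^{ab}\simeq\Z^r$ is torsion-free (the fact from \cite{BMR} the paper also uses). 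This is cleaner and uniform, and it even proves more in rank 2: triviality of $\mathcal{V}(\rho)$ for every finite-dimensional representation $\rho$ of $B$, not only those factoring through $W$.

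For the exceptional groups, however, your plan has a genuine gap. You propose to prove that $R(W)$ is generated by the $\Lambda^k V$ alone, but this is stronger than what is true or verifiable, and it is not how the listed cases actually go. Even in the favourable cases $G_{24},G_{25},G_{26},G_{33},G_{35}=E_6$ the generating set must include the one-dimensional characters, whose bundles are trivialized by a separate deformation argument (Proposition \ref{propdim1}), not by Schur-functor considerations. For $G_{23}=H_3$ one needs an extra generator $X$ extracted from $S^4V$, i.e.\ Schur functors beyond exterior powers (legitimate by Proposition \ref{propbase} (3)); for $G_{30}=H_4$ the exterior powers and linear characters only yield $25$ of the $34$ irreducibles by the naive algorithm, and one needs all $\mathcal{S}_{\la}(V)$ with $|\la|\le 8$ together with an LLL computation merely to get spanning; and for $G_{32}$ no manipulation of Schur functors of $V$ sufficed at all --- the proof manufactures a new manifestly trivial class $\rho_{5/6}$ by deforming the KZ connection attached to the $[3,2]$ representation of $\mathfrak{S}_5$ in the parameter $h$, using the isomorphism of the braid group of $G_{32}$ with $\mathcal{B}_5$, Theorem \ref{theoSN}, and Proposition \ref{propbase} (2). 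Your fallback device, ``restriction to reflection subgroups already controlled by Theorem \ref{theoSN},'' does not help: restriction lands in $R(W')$, not $R(W)$, and induction back to $W$ does not preserve triviality of the associated bundles (the pushforward of a trivial bundle along the finite cover $X/W'\to X/W$ is not trivial in general), so it adds nothing to the stock of trivial classes. Note also that Burnside plus Jacobi--Trudi only places the full classes $[V^{\otimes m}]$ in the subring generated by the $\Lambda^k V$; isolating irreducible constituents is exactly the hard point, and the paper's type $B_4$ computation (subring of $\Q$-dimension $18$ versus $20$ irreducibles) shows this isolation can genuinely fail. Finally, the delicate cases are $H_3$, $H_4$ and $G_{32}$, not $E_6$ and $G_{33}$, which are precisely where the straightforward generation statement does hold.
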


\medskip

Back to the ordinary braid groups, one interest of this last theorem
is that it applies to the exceptional Shephard groups
$G_4,G_8,G_{16},G_{25},G_{32}$, whose braid groups are ordinary
braid groups (see e.g. \cite{BMR}). It follows that the corresponding representations
of $\mathcal{B}_3,\mathcal{B}_4,\mathcal{B}_5$ also have trivial
image in the reduced K-theory of the group, and more precisely
that their associated vector bundles over $\C_*^n/\mathfrak{S}_n$ are
topologically trivial.

\medskip

\noindent {\bf Acknowledgements.} I thank Bruno Klingler and
Julien March\'e for several useful discussions on homotopy theory
and vector bundles. I also thank Rapha\"el Rouquier, Pierre
Vogel and Christian Blanchet for interesting comments or advices.

\section{Topological preliminaries}

Let $Y$ denote a connected paracompact topological space which has
the homotopy type of a $n$-dimensional CW-complex, $\tilde{Y}$
its universal cover, $B = \pi_1(Y)$. Let $X$ be a finite Galois covering
of $Y$ with transformations group $W$, so that $Y = X/W$ and
$X = \tilde{Y}/P$, where $P= \Ker(B \onto W)$.

All vector bundles considered in this paper are complex ones.
We let $\Vect_Y$ denote the category of vector bundles over $Y$, and
$\Vect(Y)$ denote the semiring of (isomorphism classes of) vector bundles
over $Y$. We let $K(Y)$ denote the corresponding K-theoretic ring, and $\tilde{K}(Y)$
the quotient of $K(Y)$ by its subring of trivial bundles. We
refer to \cite{HUSE} for these classical notions and their basic
properties.

Each linear representation $\rho : B \to \GL_N(\C)$ defines a vector
bundle $\mathcal{V}(\rho) = Y \times_{\rho} \C^N \ to Y$, defined by
$Y \times_{\rho} \C^N = (\tilde{Y} \times \C^N)/B$, where $B$ acts
on $\tilde{Y} \times \C^N$ by $g.(y,z) = (g.y,\rho(g).z)$. We let
$R^+(B)$ and $R(B)$ denote the representation semiring and representation
ring of $B$, respectively. The correspondance $\rho \mapsto \mathcal{V}(\rho)$
defines a semiring morphism $\Phi^+ : R^+(B) \to \Vect(Y)$ and
a ring morphism $\Phi : R(B) \to K(Y)$.

The group morphism $B \onto W$ induces a ring morphism $R(W) \to R(B)$
and we still denote $\Phi : R(W) \to K(Y)$ the composite
morphism. Let $\rho : W \to \GL_N(\C)$ be a representation of $W$. Then
$\Phi(\rho)$ is the class of $\mathcal{V}(\rho)$, and
$$
Y \times_{\rho} \C^N = (\tilde{Y} \times \C^N)/B = ((\tilde{Y} \times \C^N)/P)/W
= (X \times \C^N)/W,
$$
as $P$ acts trivially on $\C^N$, hence $\mathcal{V}(\rho)$
is the quotient of the trivial vector bundle $X \times \C^N \to X$
by $W$ acting on $X \times \C^N$ through $g.(x,z) = (g.x,\rho(g).z)$.

We state as a proposition a recollection of classical results to be
used in the sequel. Recall that to each partition $\la$ of some
integer $r$ are associated Schur endofunctors $\mathcal{S}_{\la}$ of the category of
finite-dimensional vector spaces. Being continuous (in the
sense of \cite{HUSE} \S 5.6) they extend to endofunctors
$\mathcal{S}_{\la}$ of $\Vect_Y$. Our convention is that $\mathcal{S}_{[1^r]}
(\xi) = \Lambda^r \xi$. We moreover assume that $B$ is finitely generated,
so that the topology of $\Hom(B,\GL_N(\C))$ is naturally defined.

\begin{prop} \label{propbase} Let $\rho$ be a (finite-dimensional) linear representation of $W$.
\begin{enumerate}
\item If $N = \dim \rho \geq n/2$, then $\Phi(\rho) \equiv 0$ in $\tilde{K}(Y)$
if and only if $\mathcal{V}(\rho)$ is a trivial vector bundle.
\item If $t \mapsto \rho_t$ is a continuous path in $\Hom(B,\GL_N(\C))$ then
$\Phi^+(\rho_0) = \Phi^+(\rho_1)$.
\item If $\mathcal{V}(\rho)$ is a trivial bundle, then so is
$\mathcal{S}_{\la}(\mathcal{V}(\rho)) \simeq \mathcal{V}(\mathcal{S}_{\la} \rho)$.
\end{enumerate}
\end{prop}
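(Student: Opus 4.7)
\smallskip

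\noindent\textbf{Proof plan.}

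\emph{Part (1).} The assertion $\Phi(\rho)\equiv 0$ in $\tilde K(Y)$ is exactly the statement that $\mathcal{V}(\rho)$ is stably trivial, i.e.\ $\mathcal{V}(\rho)\oplus \underline{\C}^m\simeq \underline{\C}^{N+m}$ for some $m\geq 0$. My plan is to invoke the classical stable range theorem for complex vector bundles (see \cite{HUSE}, Chapter 8): the map $[Y,BU(N)]\to [Y,BU]$ induced by stabilisation is a bijection as soon as $\dim Y\leq 2N$, since $BU(N)\to BU$ is a $(2N+1)$-equivalence (ultimately because $\pi_k(U/U(N))=\pi_k(S^{2N+1})=0$ for $k\leq 2N$). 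Under the hypothesis $N\geq n/2$, we have $\dim Y\leq n\leq 2N$, so the class of $\mathcal{V}(\rho)$ as a rank-$N$ bundle is determined by its stable class; if the latter vanishes, so does the former, yielding $\mathcal{V}(\rho)\simeq \underline{\C}^N$. The converse is obvious.

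\emph{Part (2).} For homotopy invariance, the natural device is to build a single vector bundle $\mathcal{W}\to Y\times[0,1]$ that interpolates between $\mathcal{V}(\rho_0)$ and $\mathcal{V}(\rho_1)$. Concretely, I would equip $\tilde{Y}\times [0,1]\times \C^N$ with the $B$-action
\[
g\cdot (y,t,v)=\bigl(g\cdot y,\,t,\,\rho_t(g)\,v\bigr),
\]
which is continuous precisely because the assignment $(g,t)\mapsto \rho_t(g)$ is continuous by hypothesis. The quotient is a rank-$N$ complex vector bundle $\mathcal{W}\to Y\times[0,1]$, and by construction the restriction of $\mathcal{W}$ along the inclusion $i_t\colon Y\hookrightarrow Y\times[0,1]$, $y\mapsto (y,t)$, is canonically isomorphic to $\mathcal{V}(\rho_t)$. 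Since $i_0$ and $i_1$ are homotopic, the standard homotopy invariance of pullbacks of vector bundles on paracompact spaces gives $\mathcal{V}(\rho_0)\simeq \mathcal{V}(\rho_1)$.

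\emph{Part (3).} This is essentially formal and follows from the fact that Schur endofunctors are natural and continuous in the sense of \cite{HUSE}\S5.6, and commute with the quotient construction $\mathcal{V}(-)$. Explicitly, $\mathcal{S}_\lambda$ applied fibrewise to the trivial bundle $\tilde{Y}\times \C^N$ produces $\tilde{Y}\times \mathcal{S}_\lambda(\C^N)$, and the $B$-equivariant structure is transported by naturality, yielding the canonical identification $\mathcal{S}_\lambda(\mathcal{V}(\rho))\simeq \mathcal{V}(\mathcal{S}_\lambda\rho)$. If $\mathcal{V}(\rho)\simeq Y\times \C^N$ is trivial, then $\mathcal{S}_\lambda(\mathcal{V}(\rho))\simeq Y\times \mathcal{S}_\lambda(\C^N)$ is again trivial.

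The only non-cosmetic step is Part (1), and even there the work is entirely in the citation: one must be sure that the dimensional bound in the classical stability/cancellation theorem matches the condition $N\geq n/2$ used here. Parts (2) and (3) are standard manipulations with associated bundles and present no genuine obstacle.
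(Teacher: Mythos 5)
Your proposal is correct and follows essentially the same route as the paper: part (1) is exactly the stable-range/cancellation theorem the paper cites (Husemoller \S 9, Theorem 1.5), which you merely re-derive via the $(2N+1)$-connectivity of $BU(N)\to BU$, and parts (2) and (3) are the standard mapping-cylinder bundle over $Y\times[0,1]$ and continuity/naturality of Schur functors that the paper invokes by citation to \cite{HUSE} \S 3.4 and \S 13. The dimension bookkeeping matches ($\dim Y\leq n\leq 2N$ since $N\geq n/2$), so no gap remains.
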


\begin{proof}
The assumption in (1) means that $\Phi^+(\rho) + \un_Y^r = \un_Y^s$
for some $r \geq 0$ and $s = r + \dim \rho$, where $\un_Y$ denotes
the trivial line bundle over $Y$. Then conclusion that $\Phi^+(\rho)
= \un_Y^{\dim \rho}$ follows from \cite{HUSE} \S 9, theorem 1.5.
For fact (2), see \cite{HUSE} \S 3.4. The proof of fact (3) is
similar to the more classical special case $\Lambda^r \mathcal{V}(\rho)
\simeq \mathcal{V}(\Lambda^r \rho)$ dealt with in \cite{HUSE} \S 13.
\end{proof}

\section{Representations of complex reflection groups}

In this section we establish general results for arbitrary reflection
groups. Let $V$ be a $n$-dimensional vector space, $W < \GL(V)$ be a finite (pseudo-)reflection
group, with associated hyperplane arrangement $\mathcal{A}$.
We denote $X = V \setminus \bigcup \mathcal{A}$, $P = \pi(X)$,
$B = \pi(X/W)$ the pure braid group and braid group associated to $W$. To each $H \in \mathcal{A}$ we associate the
order $e_H$ of the (cyclic) fixer of $H$ in $W$, and call
distinguished reflection associated to $H$
the pseudo-reflection with set of fixed points $H$ and
eigenvalue $\exp(2 \ii \pi/e_H)$.

The spaces $X$ and $X/W$ are $K(\pi,1)$ (see \cite{BESSIS}). Moreover,
$X/W$ is the complement of a finite number of hypersurfaces, hence
is an affine variety of complex dimension $n$. By \cite{MILNOR} theorem 7.2
it follows that the connected space $Y = X/W$ has the homotopy type
of a $n$-dimensional CW-complex, and the results of the above section
can be applied.

\subsection{One-dimensional representations}

There is a distinguish class of
preimages in $B$ of distinguished pseudo-reflection that are called braided
reflections (see e.g. \cite{ARRREFL}). We denote $\om_H = \dd \alpha_H / \alpha_H$
for an arbitrary linear form $\alpha_H$ with kernel $H$. If $s$
is the distinguished reflection associated to $H$ we may note
$\om_s   = \om_H$.

\begin{prop} \label{propdim1}  If $\rho_0 \in \Irr(W)$ has dimension 1, then
$\mathcal{V}(\rho_0)$ is a trivial bundle.
\end{prop}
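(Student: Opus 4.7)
My plan is to apply item~(2) of Proposition~\ref{propbase}. Lifting $\rho_0$ to a character $\tilde\rho_0 \in \Hom(B,\C^*)$ via the canonical surjection $B \onto W$, it is enough to connect $\tilde\rho_0$ to the trivial character by a continuous path in $\Hom(B,\C^*)$: once this is done, the proposition gives $\Phi^+(\rho_0) = \un_Y$, i.e.\ $\mathcal{V}(\rho_0)$ is trivial.

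To exhibit such a path explicitly, I would use the family of flat abelian connections $\nabla_t = \dd - t \sum_{H \in \mathcal{A}} \la_H \, \om_H$ on the trivial line bundle over $X$, where the coefficients $\la_H \in \C$ are chosen to depend only on the $W$-orbit of $H$, so that $\nabla_t$ descends to a flat line bundle on $Y = X/W$. Because $\rho_0(s)$ is a root of unity for each distinguished reflection $s$, one can pick the $\la_H$ so that the monodromy of $\nabla_1$ around a braided reflection associated to $H$ equals $\rho_0$ on the corresponding distinguished reflection, while $\nabla_0$ has trivial monodromy. Since the monodromy of an abelian flat logarithmic connection is continuous in its coefficients, the map $t \mapsto \rho_t$ yields the required path in $\Hom(B,\C^*)$, and Proposition~\ref{propbase}(2) finishes the argument.

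Conceptually, what is really at stake is the fact that $\Hom(B,\C^*)$ is connected, or equivalently that $B^{ab}$ is torsion-free (a classical fact, due to Brieskorn in the Coxeter case and to Brou\'e--Malle--Rouquier in general, that it is in fact a free abelian group of rank equal to the number of $W$-orbits of hyperplanes in $\mathcal{A}$). I expect no substantive obstacle: the only point worth highlighting is the contrast between $W^{ab}$, which is finite (so $\rho_0$ is rigid as a $W$-character), and $B^{ab}$, which is torsion-free, so that characters of $B$ \emph{do} admit continuous deformations. This is precisely what makes Proposition~\ref{propbase}(2) effective in the 1-dimensional case.
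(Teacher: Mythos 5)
Your proposal is correct and takes essentially the same route as the paper: the paper likewise produces a path of characters of $B$ as monodromies of a one-parameter family of $W$-equivariant logarithmic connections $\sum_s h_s\,\rho_0(s)\,\om_s$ and concludes by Proposition~\ref{propbase}(2), the only (cosmetic) difference being that it twists the fiber action by $\rho_0$ and deforms $\rho_0$ to the trivial character rather than the other way around. Your closing remark that $B^{ab}\cong\Z^r$ is torsion-free, so $\Hom(B,\C^{\times})$ is connected, is exactly the alternative argument the paper records immediately after its proof.
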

\begin{proof}
We consider the bundle $X \times \C \to X$, and the family of 1-form
$\om_h = h_s \sum_s \rho_0(s) \om_s \in \Omega^1(X) \otimes \End(\C)$,
with $h_s \in \C$ and $h_s = h_s'$ whenever $s,s'$ are conjugate in $W$.
This 1-form is thus indexed by a tuple $\underline{h}$ in $\C^r$
where $r$ is the number of conjugacy classes of distinguished reflections.
It is easily checked to be integrable and $W$-equivariant w.r.t.
the diagonal action of $W$ on $X \times \C$, where $W$ acts on $\C$ through
$\rho_0$. It thus defines flat connections on the quotient bundle
$(X \times \C \to X)/W$, for which the monodromy is given by $\rho_{\underline{h}} : B \to
\GL(\C)$, with $\rho_{\underline{h}}(\sigma) = \rho_0(s) \exp( 2 \ii \pi h_s \rho_0(s)))$,
if $\sigma$ is a braided reflection with $\pi(\sigma) = s$.
There obviously exists a collection of real scalars $h_s$ such that $\rho_{\underline{h}}(\sigma) = \Id$ for all such
distinguished reflections. Since $B$ is generated by the corresponding
braided reflections we get a continuous map $\C^r \to \Hom(B,\GL(\C))$
connecting the trivial representation and $\rho_0$. It follows
from proposition \ref{propbase} (2) that $\mathcal{V}(\rho_0)$ is a trivial bundle.
\end{proof}

Using more information on the structure of $B$, it is actually possible
to show that any 1-dimensional representation of $B$ corresponds to a
trivial bundle on $X/W$. Indeed, the abelianization $B/(B,B)$
is isomorphic to $\Z^r$ where $r$ is the number of classes of hyperplanes
in $\mathcal{A}$ (see \cite{BMR}). It follows that
$\Hom(B, \GL(\C)) \simeq (\C^{\times})^r$ is connected, hence any 1-dimensional
representation can be deformed to the trivial one.

\subsection{A remark on holomorphic line bundles}

We emphasize the fact that there is no hope to generalize the results
exposed here to arbitrary (non necessarily flat) vector bundles, by noticing
the following facts on 1-dimensional holomorphic vector bundles on $X/W$.
First recall that $X/W$ is a complement of hypersurfaces, hence is affine
and in particular is a Stein manifold. Letting $\mathcal{O}$ denote the sheaf
of holomorphic functions on $X/W$, the exponential
exact sequence $0 \to  \Z \to \mathcal{O} \to \mathcal{O}^{\times} \to 0$
and Cartan theorem B yields an isomorphism $H^1(X/W,\mathcal{O}^{\times})
\to H^2(X/W, \Z)$ given by the first Chern class. On the other
hand, $H^1(X/W,\mathcal{O}^{\times})$ classifies rank 1 holomorphic bundles
on $X/W$. In case $B$ is an ordinary braid group, we have $H^2(B,\Z) = 0$
hence all 1-dimensional bundles are holomorphically trivial.
This fact however does not extend to arbitrary reflection groups,
and the nonvanishing of $H^2(X/W,\Z)$ in general will illustrate
the specificity of the results exposed here.

We first recall the following facts. First of all, $X/W$ is a $K(\pi,1)$,
hence $H^2(X/W,\Z) = H^2(B,\Z)$. Moreover, any such braid group
$B$ is obtained as $\pi_1(X/W)$ for some reflection group
$W$ with reflections of order 2. Finally, $H_1(B,\Z) = B/(B,B)$ is torsion-free,
hence the universal coefficients exact sequence $0 \to \mathrm{Ext}(H_1(B),\Z)
\to H^2(B,\Z) \to \Hom(H_2(B),\Z) \to 0$ implies that $H^2(B,\Z)$
is a free $\Z$-module, of rank $b_2 = \dim H^2(B,\Q)$. It follows that
we only need to compute the second Betti number $b_2$ when $W$ is a 2-reflection
group to get $H^2(X/W,\Z)$ in general.

The Shephard-Todd classification (see \cite{ST}) divides the irreducible
reflection groups into the infinite series $G(de,e,n)$ and 34 exceptional groups $G_4,\dots,G_{37}$.
For $d,e,r \geq 1$, the group $G(de,e,r)$
is defined as the group of $r \times r$ monomial matrices
whose nonzero entries lie in $\mu_{de}(\C)$ and have
product in $\mu_d(\C)$. Inside this infinite series, the groups whose reflections have order 2
are the groups $W = G(e,e,n)$ and $W = G(2e,e,n)$.

\begin{prop} Let $W$ be an irreducible 2-reflection group with associated
braid group $B$. Then $H^2(B,\Z) = \Z^{b_2}$ with
\begin{itemize}
\item $b_2 = 2$ if $W$ has type $G_{28} = F_4$, $G(2e,e,n)$ with
$n \geq 3$, or $G(2e,e,2)$ with $e$ even.
\item $b_2 = 1$ if $W$ has type $G_{13},G_{23},G_{24},G_{27}$,
$G(e,e,2)$ with $e$ even, or $G(2e,e,2)$ with $e$ odd.
\item $b_2 = 0$ otherwise.
\end{itemize}
\end{prop}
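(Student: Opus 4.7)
Since the preceding discussion has already reduced the problem to computing the single number $b_2 = \dim H^2(B,\Q)$ for each $W$, the plan is to use the Orlik-Solomon description of $H^*(X,\Q)$ together with the transfer isomorphism
\[
H^2(X/W,\Q) \;\cong\; H^2(X,\Q)^W
\]
(valid since $W$ acts freely on $X$ and we are in characteristic $0$), and then to carry out a case-by-case count. The algebra $H^*(X,\Q)$ is presented with degree-$1$ generators $[\om_H]$ indexed by $H \in \mathcal{A}$ and the usual Orlik-Solomon relations coming from the minimal dependent subfamilies of~$\mathcal{A}$. A basis of $H^2(X,\Q)$ is then indexed by pairs $\{H,H'\}$ (modulo these triple relations), and $W$ permutes this set according to its action on $\mathcal{A}$. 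To isolate $H^2(X,\Q)^W$ I would average over $W$-orbits of pairs of hyperplanes and then take into account the $W$-equivariant versions of the Orlik-Solomon triple relations.

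\textbf{The infinite series.} For $G(e,e,n)$ the arrangement is $\{z_i = \zeta z_j : i<j,\ \zeta \in \mu_e\}$; for $G(2e,e,n)$ one adjoins the coordinate hyperplanes $\{z_k = 0\}$. I would enumerate the $W$-orbits of pairs of distinct hyperplanes (there are only a handful: ``same indices / different root of unity'', ``disjoint index sets'', ``overlapping index sets'', and for $G(2e,e,n)$ also the orbits involving a coordinate hyperplane) and write the corresponding averaged classes in $H^2(X,\Q)$. The surviving dimension is then obtained by imposing the $W$-orbit sums of the triple relations
\[
[\om_{H_1}][\om_{H_2}] - [\om_{H_1}][\om_{H_3}] + [\om_{H_2}][\om_{H_3}] \;=\; 0
\]
for each dependent triple (typically $z_i = \zeta z_j$, $z_j = \zeta' z_k$, $z_i = \zeta\zeta' z_k$, and the triples $z_i,z_j,z_i = \zeta z_j$ through the origin when coordinate hyperplanes are present). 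The parity of $e$ enters through which $W$-orbits of pairs coincide and through whether certain averaged triple relations vanish identically, and the split $n=2$ vs.\ $n\geq 3$ enters because for $n=2$ there are no dependent triples off the origin. I expect the four buckets of the statement to fall out cleanly from this bookkeeping.

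\textbf{Exceptional groups.} For each of the exceptional 2-reflection groups the computation reduces to a finite, explicit linear algebra problem. When $W$ is a real Coxeter group ($G_{23}, G_{27}, G_{28}, G_{29}, G_{30}, G_{35}, G_{36}, G_{37}$) the $W$-invariants of the Orlik-Solomon algebra of the Coxeter arrangement are classical and one can simply read off $b_2$; for the remaining complex exceptional $2$-reflection groups ($G_{13}, G_{24}$, and in rank $2$) the same computation is done directly from the explicit arrangements. In rank $2$ an arrangement of $m$ complex lines through the origin has $\dim H^2(X,\Q) = m-1$, and the $W$-invariants are immediately read off from the orbit structure of $W$ on the lines, which handles $G_{13}$ and also $G(e,e,2)$, $G(2e,e,2)$.

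\textbf{Main obstacle.} The genuine technical difficulty is step~2 for $G(2e,e,n)$ with $n\geq 3$: here there are two $W$-orbits of hyperplanes, many $W$-orbits of pairs, and both families of triple relations interact. Organizing the calculation so that the count $b_2 = 2$ appears uniformly in $e$ (independent of its parity, contrary to what happens in rank $2$) is the core of the proof, and is where I would expect the argument to be most delicate.
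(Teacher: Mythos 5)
Your overall strategy (transfer isomorphism $H^2(X/W,\Q)\cong H^2(X,\Q)^W$ plus invariants of the Orlik--Solomon algebra) is sound, and for the exceptional groups it is essentially what the paper does, except that the paper short-circuits your ``average the relations'' step by invoking the closed combinatorial formula of Orlik--Terao (their Corollary 6.17), which expresses $b_2(X/W)$ purely in terms of the $W$-orbits of codimension-2 flats $Z$ and the numbers $|\mathcal{H}_Z/W_Z|$; this reduces each exceptional case to pure bookkeeping on the intersection lattice, with no linear algebra on invariant 2-forms. For the infinite series the paper takes a genuinely different route: it does not touch the Orlik--Solomon algebra at all, but reads the full Poincar\'e polynomial of $X/W$ for $G(e,e,n)$ off Lehrer's Corollary 6.5, and for $G(2e,e,n)$ extracts it from Lehrer's Theorem 6.1 by taking invariants under the relevant $\mu_2$-action (this is exactly where the parity of $e$ enters, through whether $\mu_2$ acts trivially); the coefficient of $t^2$ then gives all four buckets at once, including the uniformity in $e$ for $n\geq 3$ that worries you.

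As it stands, though, your proposal has genuine gaps. First, the computation you identify yourself as ``the core of the proof'' --- the orbit-and-relation bookkeeping for $G(2e,e,n)$, $n\geq 3$, and more generally the infinite series, where the parity of $e$ and the split $n=2$ versus $n\geq 3$ must actually produce the stated values $2,1,0$ --- is only announced, not carried out; ``I expect the four buckets to fall out cleanly'' is not an argument, and this is precisely the part that cannot be waved at, since the answer changes with these parameters. Second, your case list for the exceptional groups is wrong: $G_{27}$ and $G_{29}$ are not real Coxeter groups (so their invariants cannot be ``read off'' from classical Coxeter results), and the 2-reflection groups $G_{12}$, $G_{22}$, $G_{31}$, $G_{33}$, $G_{34}$ are missing from your enumeration altogether, so the case-by-case check is not even correctly set up. Both defects are repairable within your framework --- the Orlik--Terao degree-2 formula applied to each arrangement, including the $G(de,e,n)$ arrangements, would systematize your averaging and settle every case --- but the proposal as written does not yet prove the proposition.
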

\begin{proof}
For the infinite series $G(e,e,n)$ and $G(2e,e,n)$, we deduce
these Betti numbers from \cite{LEHRER}, where they are implicitely
computed. More precisely, corollary 6.5 in \cite{LEHRER} states
that the Poincar\'e polynomial of $X/W$ is $1+t$ if $e$ or $n$ is
odd, and $1+t+t^{n-1}+t^n$ is $e$ and $n$ are even ; for the
series $G(2e,e,n)$, letting $\tilde{Y} = \{ (z_1,\dots,z_n) \ | \ z_i \neq 0,
z_i/z_j \not\in \mu_{2e}(\C) \} / G(2e,2e,n)$, theorem 6.1 in \cite{LEHRER} decomposes
the rational cohomology groups of $\tilde{Y}$ under the action of $G(2e,1,n)/G(2e,2e,n)
\simeq \mu_{2e}(\C)$. In this case $X/W = \tilde{Y} / G(2e,e,n)$ hence
$H^*(X/W, \Q) = H^{*}(\tilde{Y},\Q)^{\mu_2(\C)}$ where $\mu_2(\C)$ is identified
with $G(2e,e,n) /G(2e,2e,n)$. The action of $G(2e,1,n)$ on $H^*(\tilde{Y},\Q)$
factorizes through the projection $\delta : G(2e,1,n) / G(2e,2e,n) = \mu_{2e}(\C) \to \mu_2(\C)$.
We have $\delta(\mu_2(\C)) = \{ 1 \}$ if $e$ is even, and $\delta(\mu_2(\C)) = \mu_2(\C)$ is
$e$ is odd. A consequence of this theorem 6.1 is thus that
the Poincar\'e polynomial of $X/W$ is
$$
 \left\lbrace \begin{array}{ll}
(1+t)(1+t+t^2+ \dots + t^{n-1}) + t^{n-1} + t^n & \mbox{ if $n$ and $e$ are even} \\
(1+t)(1+t+t^2+ \dots + t^{n-1}) & \mbox{otherwise.}
\end{array} \right.
$$
The conclusion follows for the infinite series. For
the exceptional groups, using $H^*(X/W,\Q) = H^*(X,\Q)^W$
and the combinatorial description of $H^*(X,\Q)$,
it is shown in \cite{ORLIKTERAO} (corollary 6.17) that
$
b_2 = \sum_{Z \in T_2} |\mathcal{H}_Z / W_Z | - 1
$
where $T_2$ is a system of representatives for the codimension
2 subspaces in the hyperplane arrangement lattice under the
action of $W$ and, for $Z \in T_2$, $\mathcal{H}_Z = \{ H \in \mathcal{A} \ | \ H \supset Z \}$
and $W_Z = \{ w \in W \ | \ w(Z) = Z \}$. Direct computations for all exceptional
2-reflection groups concludes the proof.
\end{proof}

\subsection{Reflection representations}
We consider the diagonal
action of $W$ on $X \times V$ and the trivial bundle on $X \times V \to X$
with the corresponding action.

\begin{prop} \label{proptrivref}
The quotient bundle $(X \times V \to X)/W$ is holomorphically
equivalent to the trivial bundle $(X/W)\times V \to (X/W)$.
\end{prop}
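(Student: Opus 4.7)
The plan is to produce an explicit holomorphic trivialization using the Jacobian matrix of a system of basic invariants of $W$, which will generalize and explain the role of the Vandermonde determinant in the case $W = \mathfrak{S}_n$.

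By the Chevalley--Shephard--Todd theorem, the invariant ring $\C[V]^W$ is a polynomial algebra on $n = \dim V$ algebraically independent homogeneous generators $f_1,\dots,f_n$. Let $z_1,\dots,z_n$ be linear coordinates on $V$ and consider the Jacobian matrix $J(z) = \bigl( \partial f_i / \partial z_j \bigr)_{1 \leq i,j \leq n}$, viewed as a polynomial map $V \to \End(V)$. A classical fact (essentially due to Steinberg) is that $\det J$ equals, up to a nonzero scalar, the product $\prod_{H \in \mathcal{A}} \alpha_H^{e_H - 1}$, where $\alpha_H$ is a linear form with kernel $H$. In particular $\det J(x)$ is invertible for every $x \in X$, so $J$ restricts to a holomorphic map $X \to \GL(V)$.

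Next I would check that $J$ carries the correct equivariance. Differentiating the identity $f_i(w \cdot z) = f_i(z)$ (for $w \in W$, where we view $w$ as a linear endomorphism of $V$) gives $J(w \cdot z) \circ w = J(z)$, that is
\[
J(w \cdot z) = J(z) \circ w^{-1}.
\]
Define the holomorphic bundle map
\[
\Psi : X \times V \longrightarrow X \times V, \qquad \Psi(x,v) = (x, J(x) v).
\]
The above equivariance gives
\[
\Psi\bigl(w \cdot x, w \cdot v\bigr) = \bigl(w \cdot x, J(w \cdot x) w \cdot v\bigr) = \bigl(w \cdot x, J(x) v\bigr),
\]
so $\Psi$ intertwines the diagonal $W$-action on the source with the action on the target in which $W$ acts only on the first factor. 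Since $J(x) \in \GL(V)$ for every $x \in X$, $\Psi$ is a holomorphic isomorphism of the underlying trivial bundles, and passing to the quotient by $W$ yields a holomorphic isomorphism
\[
(X \times V)/W \;\xrightarrow{\;\sim\;}\; (X/W) \times V,
\]
which is what we wanted.

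I do not expect any serious obstacle: the only nontrivial ingredients are Chevalley--Shephard--Todd (to get the polynomial generators $f_i$) and the fact that $\det J$ vanishes exactly on the reflecting hyperplanes (so that $J$ takes values in $\GL(V)$ precisely over $X$). The equivariance computation is then immediate from the chain rule, and the explicit formula $\Psi(x,v) = (x, J(x) v)$ even shows that the trivialization is algebraic, not merely holomorphic. In the case $W = \mathfrak{S}_n$ acting on $V = \{ \sum z_i = 0 \}$, taking $f_i = \sum z_j^{i+1}$ recovers (a multiple of) the Vandermonde matrix used in \cite{ACC,ARNOLD}.
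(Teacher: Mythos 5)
Your proof is correct and follows essentially the same route as the paper: trivialize via $\Psi(x,v)=(x,\dd_x f(v))$ built from the Jacobian of a set of basic invariants, using the Chevalley--Shephard--Todd theorem, the $W$-equivariance from the chain rule, and the fact that $\det(\partial f_i/\partial z_j)$ vanishes only on the reflecting hyperplanes. The only cosmetic difference is that you cite the precise Steinberg formula $\det J = c\prod_H \alpha_H^{e_H-1}$ where the paper only needs (and cites) the containment of the zero locus in $\bigcup\mathcal{A}$.
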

\begin{proof}
Identifying $V$ with $\C^n$, the algebra $\C[V] = S(V^*)$
of polynomial functions on $V$ is acted upon by $W$, and
the Chevalley-Shephard-Todd theorem (see e.g. \cite{BENSON}) states that
$\C[V]^W$ is the algebra of polynomial in $n$
algebraically independant polynomials $f_1,\dots, f_n$. We then
define $f : V \to V$ by $f =(f_1,\dots,f_n)$ and consider
$\Psi : V \times V \to V \times V$ defined by
$\Psi(x,v) = (x,\dd_x f(v))$, where $\dd_x f \in \End(V)$
is the differential of $f$ at $x$. We have $f(w.x) = x$
for all $x \in V, w \in W$, hence $\dd_{w.x}f (v) = 
\dd_x f(w^{-1}.v)$. It follows that
$\Psi(w.(x,v)) = (w.x, \dd_{w.x} f(w.v)) = (w.x , \dd_x f(v))$.
On the other hand, $\dd_x f \in \GL(V)$ for all $x \in X$ if and only if
the zero locus of $x \mapsto \det( \dd_x f) = \det( \frac{\partial f_i}{\partial x_j} )$
is contained in $\bigcup \mathcal{A}$. This is well-known
to be the case (see e.g. \cite{BENSON} \S 7.3), hence $\Psi$
defines an holomorphic automorphism of the trivial bundle $X \times V \to V$
which induces after quotient by $W$ an holomorphic isomorphism
between $(X \times V \to X)/W$ and $(X/W) \times V \to (X/W)$.
\end{proof}

If $W$ is the symmetric group $\mathfrak{S}_{n+1}$ acting on $\C^{n+1} = V \times \C$
with $V = \{ (z_1,\dots,z_{n+1}) \in \C^{n+1} \ | \ z_1 + \dots + z_{n+1} = 0 \}$
then the $f_1,\dots,f_n$ can be chosen to be the power sums
$f_k = x_1^k+\dots+x_{n+1}^k$. In that case, up to a rescaling,
the map $x \mapsto \dd_x f$ is the Vandermonde spiral of Arnold,
and yields the trivialization of \cite{ACC}.

\subsection{A lower bound for the dimensions of the representations}

We recall that, for $d,e,r \geq 1$, the group $G(de,e,r)$
is defined as the group of $r \times r$ monomial matrices
whose nonzero entries lie in $\mu_{de}(\C)$ and have
product in $\mu_d(\C)$. In particular
$G(de,e,r)$ contains the symmetric group $\mathfrak{S}_r$
of permutation matrices, and we have $G(de,e,r)
= \mathfrak{S}_r \ltimes D_r$, where $D_r$ is the
subgroup of diagonal matrices in $G(de,e,r)$.
We embed $G(de,e,r-1)$ in $G(de,e,r)$ by leaving fixed
the last basis vector.

\begin{lemma} Let $d,e \geq 1$ and $W_r = G(de,e,r)$ with $r \geq 4$.
If $p: W_r \to Q$ is a group morphism such that $p(W_{r-1})$ is commutative
then $p(W_r)$ is commutative.
\end{lemma}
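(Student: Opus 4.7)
The hypothesis is equivalent to $W_{r-1}' \subseteq \ker p$. Since $\ker p$ is normal in $W_r$, it contains the normal closure $N$ of $W_{r-1}'$ in $W_r$, and the conclusion follows as soon as one shows that $W_r/N$ is abelian, that is $W_r' \subseteq N$. This reformulation removes the dependence on the particular morphism $p$ and reduces the problem to a purely group-theoretic claim about the pair $(W_r, W_{r-1})$.

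My plan is to exploit the decomposition $W_r = \mathfrak{S}_r \ltimes D_r$ together with a small generating set. Set $s = (r-1, r)$, pick a generator $\zeta$ of $\mu_{de}(\C)$, and put $d_0 = \mathrm{diag}(\zeta^{-1}, 1, \ldots, 1, \zeta) \in D_r$. Then $\mathfrak{S}_r = \langle \mathfrak{S}_{r-1}, s \rangle$, and the projection on the last coordinate identifies $D_r/D_{r-1}$ with $\mu_{de}(\C)$ while sending $d_0$ to $\zeta$; hence $W_r$ is generated by $W_{r-1} \cup \{s, d_0\}$. The image of $W_{r-1}$ in $W_r/N$ is abelian by the definition of $N$, so it suffices to check that each of $[s, W_{r-1}]$, $[d_0, W_{r-1}]$, and $[s, d_0]$ is contained in $N$.

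The first key step is the inclusion $A_r \subseteq N$. Indeed $A_{r-1} = \mathfrak{S}_{r-1}' \subseteq W_{r-1}' \subseteq N$, and since $r - 1 \geq 3$, every 3-cycle of $\mathfrak{S}_r$ is $\mathfrak{S}_r$-conjugate to some 3-cycle supported in $\{1, \ldots, r-1\}$, which is precisely where the hypothesis $r \geq 4$ is used. The normality of $N$ and the generation of $A_r$ by 3-cycles then force $A_r \subseteq N$, which in particular handles $[s, \mathfrak{S}_{r-1}] \subseteq [\mathfrak{S}_r, \mathfrak{S}_r] = A_r \subseteq N$.

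The main technical obstacle, in my view, is to control the remaining commutators, all of which involve $D_r$. A direct computation gives $[s, d] = \mathrm{diag}(1, \ldots, 1, a_r a_{r-1}^{-1}, a_{r-1} a_r^{-1})$ for $d = \mathrm{diag}(a_1, \ldots, a_r) \in D_r$, and the idea will be to recognize this element as the $(r-2, r)$-conjugate of a commutator of the form $[(r-2, r-1), d'] \in W_{r-1}'$ for a suitable $d' \in D_{r-1}$, which places it in $N$. A parallel and easier computation shows that $[d_0, \tau]$ lies in $D_{r-1} \cap W_{r-1}'$ for every $\tau \in \mathfrak{S}_{r-1}$, while $[d_0, e] = 1$ for $e \in D_{r-1}$ by commutativity of $D_r$, and $[s, d_0]$ is itself a special case of the first computation above. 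Assembling these verifications yields the abelianness of $W_r/N$, hence the statement.
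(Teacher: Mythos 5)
Your argument is correct, and it shares the paper's two pivots --- forcing $\mathfrak{A}_r$ into the kernel (for you, into the normal closure $N$ of $W_{r-1}'$) via a $3$-cycle coming from $\mathfrak{S}_{r-1}'=\mathfrak{A}_{r-1}$, and the extra diagonal generator $d_0=\mathrm{diag}(\zeta^{-1},1,\dots,1,\zeta)$ --- but it finishes differently. The paper works in the image $p(W_r)$: since $\mathfrak{A}_r\subseteq\Ker p$, the conjugation action of $p(\mathfrak{S}_r)$ on $p(D_r)$ factors through the sign character, hence is trivial on $p(D_{r-1})$ because $\mathfrak{S}_{r-1}$ already contains odd elements centralizing $p(D_{r-1})$; then $d_0$ is conjugate by a $3$-cycle to an element of $D_{r-1}$, so $p(D_r)=p(D_{r-1})$ and $p(W_r)=p(\mathfrak{S}_r)\,p(D_{r-1})$ is visibly abelian. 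You instead check pairwise commutation of generators modulo $N$ by explicit commutator identities; this works and avoids the sign-character argument, at the cost of two small verifications you should write out. First, when producing $d'\in D_{r-1}$ with $b_{r-2}b_{r-1}^{-1}=a_r a_{r-1}^{-1}$, the entries of $d'$ must have product in $\mu_d(\C)$; this is arranged by placing a compensating entry $(a_{r-1}a_r)^{-1}$ in a third coordinate, which exists because $r-1\geq 3$ --- so $r\geq 4$ is used here too, not only in the $3$-cycle step. Second, your computation of $[d_0,\tau]$ (equal to $1$ if $\tau(1)=1$, and otherwise the diagonal matrix with $\zeta^{-1}$ in position $1$ and $\zeta$ in position $\tau(1)$) shows membership in $D_{r-1}$, but membership in $W_{r-1}'$ is a further step: realize it as $[d',(1\ \tau(1))]$ with $d'\in D_{r-1}$ carrying the compensating $\zeta$ on a third coordinate, again using $r-1\geq 3$. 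With these details filled in your proof is complete; the paper's version is shorter, while yours isolates the purely group-theoretic fact that the normal closure of $W_{r-1}'$ in $W_r$ contains $W_r'$, which is equivalent to the lemma.
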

\begin{proof}
The kernel of $p$ contains the derived subgroup
$(\mathfrak{S}_{r-1},\mathfrak{S}_{r-1})$, which contains a 3-cycle
since $r \geq 4$. Since $\Ker p$ is normal it thus contains
the alternating group $\mathfrak{A}_r$. In particular
$p(\mathfrak{S}_r)$ is commutative, its action on $p(D_r)$
factors through the sign character, and is trivial on $p(D_{r-1})$.
Let $\zeta$ be a primitive $de$-th root of 1 and
$x = \mathrm{diag}(\zeta^{-1},1,\dots,1,\zeta)$. It is clear that
$D_r$ is generated by $x$ and $D_{r-1}$. But
$x$ is conjugate to some element of $D_{r-1}$ by some 3-cycle,
hence $p(x) \in p(D_{r-1})$ and $p(D_r) =p(D_{r-1})$. The conclusion
follows.
\end{proof}

\begin{lemma} If $\rho$ is an irreducible representation
of $G(de,e,r+1)$ with $\dim \rho > 1$, then its restriction
to $G(de,e,r-1)$ is not irreducible.
\end{lemma}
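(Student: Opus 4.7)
The plan is to argue by contradiction: suppose $\rho|_{W_{r-1}}$ is irreducible while $\dim \rho > 1$, and deduce that $\rho$ has abelian image, which contradicts irreducibility in dimension $> 1$.

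First I would apply Schur's lemma to $\rho|_{W_{r-1}}$. Irreducibility forces the centralizer of $\rho(W_{r-1})$ in $\End(V)$ to equal $\C \cdot \Id$, so every element of $W_{r+1}$ commuting with $W_{r-1}$ is sent by $\rho$ to a scalar matrix. With the embedding $W_{r-1} \hookrightarrow W_{r+1}$ acting on the first $r-1$ coordinates, two natural families of such commuting elements are the transposition $\tau = (r,r+1)$ and the diagonal matrices $\mathrm{diag}(1,\ldots,1,\zeta,\zeta^{-1})$ for $\zeta \in \mu_{de}$.

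Next I would propagate by $W_{r+1}$-conjugation: a scalar is invariant under $\GL(V)$-conjugation, so every $\mathfrak{S}_{r+1}$-conjugate of $\tau$ — i.e.\ every transposition — maps to the same scalar under $\rho$, and every diagonal matrix with $\zeta, \zeta^{-1}$ in two arbitrary positions does too. The transpositions generate $\mathfrak{S}_{r+1}$, the balanced diagonals generate the product-one subgroup $D_{r+1} \cap G(de,de,r+1)$, and together they generate the semidirect product $G(de,de,r+1) = \mathfrak{S}_{r+1} \ltimes (D_{r+1} \cap G(de,de,r+1))$. Hence $\rho(G(de,de,r+1)) \subset \C^\times \Id$.

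Finally I would use the cyclic quotient: $G(de,e,r+1)/G(de,de,r+1) \cong \mu_d$ is cyclic via the product-of-entries character, so picking any preimage $g_0 \in W_{r+1}$ of a generator, the subgroup $\rho(W_{r+1})$ is generated by scalars together with the single matrix $\rho(g_0)$ and is therefore abelian. An irreducible representation with abelian image is $1$-dimensional, contradicting $\dim \rho > 1$. The only non-formal step is the structural identification of $G(de,de,r+1)$ as the subgroup generated by transpositions and balanced diagonals — elementary but dependent on the specific description of the infinite series $G(de,e,n)$. This approach does not directly invoke the previous lemma, although it operates in the same spirit of promoting commutativity information from $W_{r-1}$ up through the chain to $W_{r+1}$.
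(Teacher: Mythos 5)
Your argument is correct, and it is genuinely different from the one in the paper. The key steps all check out: $(r,r+1)$ and $\mathrm{diag}(1,\dots,1,\zeta,\zeta^{-1})$ with $\zeta\in\mu_{de}(\C)$ do lie in $G(de,e,r+1)$ (entries in $\mu_{de}(\C)$, product $1\in\mu_d(\C)$) and centralize the copy of $G(de,e,r-1)$ supported on the first $r-1$ coordinates, so Schur's lemma applied to the (assumed irreducible) two-step restriction forces their images to be scalars; scalarity is conjugation-invariant, and the transpositions together with the balanced diagonals indeed generate $G(de,de,r+1)=\mathfrak{S}_{r+1}\ltimes\{\mathrm{diag}(\zeta_1,\dots,\zeta_{r+1})\ |\ \zeta_i\in\mu_{de}(\C),\ \prod\zeta_i=1\}$; finally the product-of-entries map is a homomorphism $G(de,e,r+1)\to\mu_d(\C)$ with kernel $G(de,de,r+1)$, so the whole image of $\rho$ is generated by scalars and one extra element, hence is abelian, contradicting irreducibility in dimension $>1$. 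The paper proceeds quite differently: it first observes that irreducibility of the restriction to $G(de,e,r-1)$ forces irreducibility of the restriction to $G(de,e,r)$, then invokes the branching results of \cite{BRANCHING} (propositions 3.1 and 3.2) to extend $\rho$ to the wreath product case $e=1$, parametrizes irreducibles by multipartitions, and uses the branching rule to see that an irreducible one-step restriction forces a single rectangular partition $[a^b]$ with $a,b\geq 2$, whose two-step restriction contains the two distinct constituents $[a^{b-1},a-2]$ and $[a^{b-2},a-1,a-1]$. Your route buys elementarity and self-containment (no appeal to the branching literature, no reduction to $e=1$), in the same spirit as the commutativity-propagation lemma preceding this one in the paper; the paper's route buys finer combinatorial information, namely an identification of exactly which irreducibles restrict irreducibly one step down, which your centralizer argument does not see.
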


\begin{proof}
By contradiction we assume otherwise. Then
its restriction to $G(de,e,r)$ is already irreducible.
By \cite{BRANCHING} propositions 3.1 and 3.2,
in that case $\rho$ can be extended to a representation of
$G(de,1,r)$, so we can assume $e=1$. Since $G(d,1,r+1)$
is a wreath product, $\rho$ is classically associated to
a multipartition ($d$-tuple of partitions). By the branching rule
for such wreath products, since the restriction to $G(d,1,r)$
is irreducible, the multipartition has only one part, which has the
form $[a^b]$ (that is to say, its Young diagram is a rectangle),
with $ab = r+1$. Since $\dim \rho > 1$ we have $a,b \geq 2$. But then
its restriction to $G(d,1,r-1)$ is not irreducible,
because it contains the components labelled by $[a^{b-1},a-2]$ and
$[a^{b-2},a-1,a-1]$, which are distinct. This contradiction concludes
the proof.
\end{proof}

\begin{prop} \label{propdimn2} Let $W$ be a finite irreducible pseudo-reflection
group of rank $n$. Then every irreducible complex representation
$\rho$ of $W$ with $\dim \rho > 1$ has dimension at least $n/2$.
\end{prop}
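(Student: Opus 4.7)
The plan is as follows. First, when $n \leq 4$ the hypothesis $\dim \rho > 1$ already gives $\dim \rho \geq 2 \geq n/2$, so there is nothing to prove; we may therefore assume $n \geq 5$. By the Shephard-Todd classification, $W$ is either one of the $34$ exceptional groups $G_4,\dots,G_{37}$ or belongs to the infinite series $G(de,e,r)$ (in which case $n=r$). Among the exceptional groups, only $G_{33}$, $G_{34}$, $G_{35}=E_6$, $G_{36}=E_7$ and $G_{37}=E_8$ have rank $\geq 5$, and for each of them the inequality $\dim \rho \geq n/2$ is verified by direct inspection of the known character tables.

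The remaining and main case is that of the infinite series, which I would treat by induction on $n$. Fix $d,e \geq 1$, write $W_s = G(de,e,s)$, and assume that the claim holds for $W_{n-2}$ (for $n\leq 6$ this is trivial, since then any irreducible of dimension $>1$ has dimension $\geq 2 \geq (n-2)/2$). Let $\rho$ be an irreducible representation of $W_n$ with $\dim \rho > 1$. Applying the second lemma above with $r = n-1$, the restriction $\rho|_{W_{n-2}}$ is \emph{not} irreducible, and hence decomposes as a direct sum of at least two irreducible components.

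I would then split into two cases. If every irreducible component of $\rho|_{W_{n-2}}$ is one-dimensional, then $\rho(W_{n-2})$ is simultaneously diagonalisable, hence abelian; applying the first lemma first with $r = n-1$ and then with $r = n$ (both applicable since $n \geq 5$ ensures $n-1\geq 4$), we deduce that $\rho(W_n)$ is itself abelian, which contradicts the irreducibility of $\rho$ together with $\dim \rho > 1$. Otherwise, some component $\tau$ of $\rho|_{W_{n-2}}$ has $\dim \tau \geq 2$, and the inductive hypothesis applied to $W_{n-2}$ yields $\dim \tau \geq (n-2)/2$. The presence of at least one further component of dimension $\geq 1$ then gives
$$
\dim \rho \ \geq \ \dim \tau + 1 \ \geq \ \frac{n-2}{2} + 1 \ = \ \frac{n}{2},
$$
as required.

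The only genuine obstacle I foresee is the finite verification for the five exceptional groups of rank $\geq 5$, which is routine but requires explicit character-theoretic data rather than a conceptual input; the inductive argument for the infinite series is essentially forced by the two preceding lemmas, which together dictate restricting two ranks down and then splitting on whether the resulting restriction is a sum of linear characters.
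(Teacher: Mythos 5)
Your argument is correct and is essentially the paper's own proof: reduction via the Shephard--Todd classification, character-table inspection for the exceptional groups (you rightly observe only the five of rank $\geq 5$ actually need checking, whereas the paper cites all 34), and for $G(de,e,r)$ the same induction stepping down two ranks, invoking the two lemmas exactly as the paper does and splitting on whether the restriction to $G(de,e,n-2)$ has only one-dimensional constituents, with the identical estimate $\dim\rho \geq 1+(n-2)/2 = n/2$ in the remaining case.
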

\begin{proof}
We use the Shephard-Todd classification and a case-by-case examination
to verify that the statement holds for the 34 exceptional
groups. We then assume $W = W_r = G(de,e,r)$, and we show
that $\dim \rho > 1 \Rightarrow \dim \rho \geq r/2$
by induction on $r$. Since the statement is void for $r \leq 4$
we assume $r \geq 4$, and let $\rho \in \Irr(W_{r+1})$ with $\dim \rho> 1$.
This implies that $\rho(W_{r+1})$ is not commutative. By the lemma above
the restriction of $\rho$ to $W_{r-1}$ is not irreducible. If one of its
component has dimension at least 2, then by the induction hypothesis
$\dim \rho \geq 1 + (r-1)/2 = (r+1)/2$. On the other hand, if
all the components have dimension 1, then for $r \geq 4$ this would
imply that $\rho(W_r)$ and $\rho(W_{r+1})$ are commutative,
contradicting $\dim \rho > 1$. This concludes the proof of
the proposition.
\end{proof}

\section{The representation ring of the symmetric groups}

We denote $\mathfrak{S}_n$ the symmetric group on
$n$ letters, naturally embedded on $\mathfrak{S}_{n+1}$
by action on the $n$ first letters. We denote
$R_n = R_(\mathfrak{S}_n)$.
The basis in $R_n$ of irreducible representations is naturally indexed
by partitions $\la$ of $n$, represented by Young diagrams.
We let $V_{\la}$ denote the associated element of $R_n$.

We let $R_{\infty} = \bigoplus_{n=1}^{\infty} R_n$, and extend the
usual tensor product $\otimes$ on $R_n$ to $R_{\infty}$ by pointwise multiplication.
There is another well-known ring structure on $R_{\infty}$, given
by $V_{\la} \odot V_{\mu} = \sum_{\nu} L_{\la \mu \nu} V_{\nu}$,
where $L_{\la \mu \nu}$ are the well-known Littlewood-Richardon coefficients,
and $\nu$ is a partition of arbitrary size. By contrast,
the structure constants $V_{\la} \otimes V_{\mu} = \sum_{\nu}
C_{\la \mu \nu} V_{\nu}$, whose study has been initiated by
Murnaghan (1938), are notoriously complicated to understand.

A recent and remarkable discovery of Y. Dvir is that these two
products are related in the following way. For a partition $\la = [\la_1,\la_2,\dots]$ with $\la_i \geq \la_{i+1}$,
of $n$, define the partition $\theta(\la) = [\la_2,\la_3,\dots]$
of $n - \la_1$,
and let $d(\la) = |\theta(\la)| = \la_2 + \la_3 + \dots$.
It is a classical and elementary fact that $d(\la) = \min \{ r \geq 0 \ | \ V_{\la} \into
V^{\otimes r} \}$, where $V = V_{[n-1,1]}$ is the reflection
representation of $\mathfrak{S}_n$.
In particular $C_{\la,\mu,\nu} = 0$ whenever $d(\nu) > d(\la) + d(\mu)$.
Dvir's formula can be stated as follows

\begin{theo} (Dvir \cite{DVIR}, theorem 3.3) Let $\la,\mu,\nu$ partitions of $n$ such that
$d(\la) + d(\mu) = d(\nu)$.
Then $C_{\la,\mu,\nu} = L_{\theta(\la),\theta(\mu),\theta(\nu)}$.
\end{theo}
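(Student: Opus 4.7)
The plan is to prove Dvir's formula by identifying both $C_{\la,\mu,\nu}$ and $L_{\theta(\la),\theta(\mu),\theta(\nu)}$ as multiplicities of the same bimodule, obtained from the ``top degree'' components of the tensor power $V^{\otimes d(\nu)}$ of the reflection representation $V=V_{[n-1,1]}$.

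The main preliminary step is to show that, as an $\mathfrak{S}_n\times\mathfrak{S}_r$-bimodule (with $\mathfrak{S}_r$ permuting the tensor factors), the sum of those $V_\la$-isotypic components of $V^{\otimes r}$ for which $d(\la)=r$ equals
$$\bigoplus_{\alpha\vdash r,\;\alpha_1\leq n-r} V_{[n-r,\alpha]}\otimes V_\alpha.$$
I would embed $V^{\otimes r}\into U^{\otimes r}$ with $U=\un\oplus V$ the permutation representation, and compare two decompositions of $U^{\otimes r}$. On the one hand, writing $U^{\otimes r}=\bigoplus_{S\subseteq\{1,\ldots,r\}} V^{\otimes S}\otimes\un^{\otimes S^c}$ shows that only the $S=\{1,\ldots,r\}$ summand (namely $V^{\otimes r}$ itself) can contain a $V_\la$ with $d(\la)=r$, since each $V^{\otimes S}\otimes\un^{\otimes S^c}$ is isomorphic as $\mathfrak{S}_n$-module to $V^{\otimes|S|}$, which by the classical fact recalled in the paper admits no $V_\la$ with $d(\la)>|S|$. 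On the other hand, the orbit decomposition of $U^{\otimes r}$ regarded as the permutation representation on $\{1,\ldots,n\}^r$ under the diagonal action is indexed by set partitions of $\{1,\ldots,r\}$, and only the discrete-partition orbit --- the permutation representation on injections $\{1,\ldots,r\}\into\{1,\ldots,n\}$, i.e.\ $\mathrm{Ind}_{\mathfrak{S}_{n-r}}^{\mathfrak{S}_n}\un$ --- can contain a $V_\la$ with $d(\la)=r$. Comparing, the top of $V^{\otimes r}$ equals the top of $\mathrm{Ind}_{\mathfrak{S}_{n-r}}^{\mathfrak{S}_n}\un$, and the latter is obtained from Young's rule using the observation that for $\la_1=n-r$ the unique sub-partition $\gamma\subseteq\la$ of size $r$ with $\la/\gamma$ a horizontal strip is $\gamma=\theta(\la)$.

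Applying this with $r=d(\nu)$ to $V^{\otimes d(\nu)}=V^{\otimes d(\la)}\otimes V^{\otimes d(\mu)}$, the $V_\nu$-isotypic component, viewed as a module over $\mathfrak{S}_n\times(\mathfrak{S}_{d(\la)}\times\mathfrak{S}_{d(\mu)})$ via the Young inclusion $\mathfrak{S}_{d(\la)}\times\mathfrak{S}_{d(\mu)}\subset\mathfrak{S}_{d(\nu)}$, admits two descriptions. Directly from the preliminary step it is $V_\nu\otimes\mathrm{Res}^{\mathfrak{S}_{d(\nu)}}_{\mathfrak{S}_{d(\la)}\times\mathfrak{S}_{d(\mu)}}V_{\theta(\nu)}$. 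Using the preliminary step on each tensor factor instead, and noting that a $V_\nu$-component of $V_{\la'}\otimes V_{\mu'}$ with $d(\la')\leq d(\la)$ and $d(\mu')\leq d(\mu)$ requires $d(\nu)\leq d(\la')+d(\mu')\leq d(\la)+d(\mu)=d(\nu)$ and hence the equalities $d(\la')=d(\la)$, $d(\mu')=d(\mu)$, the same isotypic component equals $\bigoplus_{\alpha\vdash d(\la),\,\beta\vdash d(\mu)} C_{[\la_1,\alpha],[\mu_1,\beta],\nu}\,V_\nu\otimes V_\alpha\otimes V_\beta$. Extracting the coefficient of $V_{\theta(\la)}\otimes V_{\theta(\mu)}$ on the two sides and invoking Frobenius reciprocity to identify the multiplicity of $V_{\theta(\la)}\otimes V_{\theta(\mu)}$ in $\mathrm{Res}\,V_{\theta(\nu)}$ with the Littlewood-Richardson coefficient, one obtains $L_{\theta(\la),\theta(\mu),\theta(\nu)}=C_{\la,\mu,\nu}$.

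The main technical difficulty is the bimodule identification in the preliminary step; the delicate point is verifying that the isomorphism between the top of $V^{\otimes r}$ and the top of $\mathrm{Ind}_{\mathfrak{S}_{n-r}}^{\mathfrak{S}_n}\un$ intertwines the two a priori different $\mathfrak{S}_r$-actions (the former by permuting tensor factors, the latter by the normalizer of $\mathfrak{S}_{n-r}$ inside $\mathfrak{S}_n$), which follows from realizing both objects as $\mathfrak{S}_n$-subrepresentations of $U^{\otimes r}$ carrying the common $\mathfrak{S}_r$-action that permutes tensor factors.
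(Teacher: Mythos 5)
The paper does not actually prove this statement: it is imported verbatim from Dvir's article (Theorem 3.3 of \cite{DVIR}) and used as a black box, so there is no internal argument to compare yours against. Your proposal, on the other hand, is a genuine and, as far as I can check, correct proof, in the spirit of Schur--Weyl-type bimodule arguments rather than Dvir's original character-theoretic treatment. The key points all hold up: the ``top'' (sum of $V_\la$-isotypic parts with $d(\la)=r$) of $U^{\otimes r}$ is literally contained both in the summand $V^{\otimes r}$ and in the span of the injective basis tensors, since the other summands $V^{\otimes S}\otimes\un^{\otimes S^c}$ (by the minimality characterization of $d$) and the other $\mathfrak{S}_n$-orbit modules $\mathrm{Ind}_{\mathfrak{S}_{n-k}}^{\mathfrak{S}_n}\un$, $k<r$ (by Young's rule) contribute no such constituents; this settles the compatibility of the two $\mathfrak{S}_r$-actions exactly as you say. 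The identification of the top of $\mathrm{Ind}_{\mathfrak{S}_{n-r}}^{\mathfrak{S}_n}\un$ as $\bigoplus_{\alpha\vdash r,\ \alpha_1\le n-r}V_{[n-r,\alpha]}\otimes V_\alpha$ is correct, because $\la/\gamma$ being a horizontal strip forces $\la_i\ge\gamma_i\ge\la_{i+1}$, so $\gamma\supseteq\theta(\la)$ and, when $|\gamma|=d(\la)$, $\gamma=\theta(\la)$ with Littlewood--Richardson multiplicity $1$. The double count of the $V_\nu$-isotypic component of $V^{\otimes d(\la)}\otimes V^{\otimes d(\mu)}$ then works, the only extra input being the vanishing $C_{\la',\mu',\nu}=0$ for $d(\nu)>d(\la')+d(\mu')$, which the paper itself records and which follows from the same minimality fact, plus Frobenius reciprocity to convert the restriction multiplicity into $L_{\theta(\la),\theta(\mu),\theta(\nu)}$ as defined via $\odot$. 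So your route makes the cited theorem self-contained at the cost of the preliminary bimodule lemma; the paper's ``approach'' buys brevity by outsourcing the whole statement to \cite{DVIR}.
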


A way to interpret this formula is to introduce a filtration
$\mathcal{F}_n R_{\infty}$ of the
$\Z$-module $R_{\infty}$ defined by where $\mathcal{F}_n R_{\infty}$
is spanned by all $V_{\la}$ with $d(\la) \leq n$. Then 
$(\mathcal{F}_n R_{\infty})(  \mathcal{F}_m R_{\infty})
\subset \mathcal{F}_{n+m} R_{\infty}$. Considering the
induced product on $\gr R_{\infty}$,
Dvir formula says that this induced product is basically
given by the Littlewood-Richardson rule. 

\subsection{A generating set for $R(\mathfrak{S}_n)$}

The ring $(R_{\infty},\odot)$ is generated by the elements
$V_{[r]}$ for $r\geq 0$, which essentially means that
the symmetric polynomials are generated by the elementary ones.
Dvir formula allows us to derive the following analogous
fact.

\begin{theo} \label{theoGenSn} The ring $R(\mathfrak{S}_n)$
is generated by the $\Lambda^k V$ for $0 \leq k \leq n-1$.
\end{theo}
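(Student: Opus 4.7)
The plan is induction on $d(\lambda)$: show that every $V_\lambda$ with $\lambda \vdash n$ lies in the subring $S \subset R(\mathfrak{S}_n)$ generated by $\Lambda^0 V, \dots, \Lambda^{n-1} V$. The base $d(\lambda) = 0$ forces $\lambda = [n]$, so $V_\lambda = \Lambda^0 V \in S$. The governing observation is that $\Lambda^k V = V_{[n-k,1^k]}$, so $\theta(\Lambda^k V) = [1^k]$, which under the usual characteristic map corresponds to the elementary symmetric function $e_k$.

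For the inductive step, I would express the Schur function $s_{\theta(\lambda)}$ as an integer polynomial $P(e_1, \dots, e_{n-1})$. Only $e_k$ with $k \leq n-1$ are needed: the dual Jacobi--Trudi determinant uses $e_k$ with $k \leq \ell(\theta(\lambda)) + \theta(\lambda)_1 - 1 \leq |\theta(\lambda)| = d(\lambda)$, and $\theta(\lambda)_1 = \lambda_2 \leq n - d(\lambda)$ forces $d(\lambda) \leq n - 1$ whenever $\theta(\lambda) \neq \emptyset$. Substituting $\Lambda^k V$ for $e_k$ and tensor product for multiplication yields an element $X := P(\Lambda^1 V, \dots, \Lambda^{n-1} V) \in S$.

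The key verification is that $X \equiv V_\lambda \pmod{\mathcal{F}_{d(\lambda)-1} R_n}$. Iterating Dvir's formula, each monomial $\Lambda^{k_1} V \otimes \cdots \otimes \Lambda^{k_m} V$ with $k_1 + \cdots + k_m = d(\lambda)$ has image in $\gr_{d(\lambda)} R_n$ equal to $\sum_\nu L_{[1^{k_1}], \dots, [1^{k_m}], \theta(\nu)} V_\nu$, summed over $\nu \vdash n$ with $d(\nu) = d(\lambda)$. Collecting terms, the coefficient of $V_\nu$ in $X$ modulo the filtration equals the coefficient of $s_{\theta(\nu)}$ in $P(e_1, \dots, e_{n-1}) = s_{\theta(\lambda)}$, namely $\delta_{\theta(\nu), \theta(\lambda)}$. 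Since $\nu_1 = n - d(\nu) = n - d(\lambda) = \lambda_1$ is determined, the equality $\theta(\nu) = \theta(\lambda)$ forces $\nu = \lambda$. Hence $X - V_\lambda \in \mathcal{F}_{d(\lambda)-1} R_n$, which by the induction hypothesis lies in $S$, and therefore $V_\lambda \in S$.

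The main obstacle I anticipate is organizing the iteration of Dvir's formula cleanly — equivalently, making precise the statement that $V_\lambda \mapsto s_{\theta(\lambda)}$ defines a graded ring homomorphism from $\gr R_n$ into the symmetric function ring, and is injective on each graded piece (the latter via the rigidity $\nu_1 = n - d(\nu)$). Once this formalism is pinned down, the rest is a routine invocation of the fact that $\Z[e_1, e_2, \dots]$ is freely generated by the elementary symmetric functions, and no deeper combinatorial input is required.
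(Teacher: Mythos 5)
Your approach is genuinely different from the paper's and its core computation is right, but the formalization you propose in the last paragraph is stated in the wrong direction, and as stated it is false. The map $V_\nu \mapsto s_{\theta(\nu)}$ is \emph{not} an injective ring homomorphism from $\gr R_n$ into the ring of symmetric functions: for example $\Lambda^{n-1}V=V_{[1^n]}$ is the sign character, so $V_{[1^n]}\otimes V_{[1^n]}$ is trivial and its class in $\gr_{2n-2}R_n$ vanishes, whereas $s_{[1^{n-1}]}^2\neq 0$. What is true, and what your argument actually needs, is the opposite statement: there is a surjective graded ring homomorphism $\pi$ from $\Z[e_1,e_2,\dots]$ onto $\gr R_n$ sending $s_\beta$ to the class of $V_{[n-|\beta|,\beta]}$ when $\beta_1\le n-|\beta|$ and to $0$ otherwise (so $\pi(e_k)$ is the class of $\Lambda^k V$ for $k\le n-1$). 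Multiplicativity of $\pi$ on basis elements is exactly Dvir's formula when both factors are ``valid'', and in the ``invalid'' case one uses that every $\delta$ with $L_{\beta\gamma\delta}\neq 0$ contains $\beta$, hence $\delta_1+|\delta|\ge\beta_1+|\beta|>n$, so truncated terms can never resurface as valid partitions after further multiplication. With this lemma in place, your computation is correct: the degree-$d(\lambda)$ class of $X=P(\Lambda^1V,\dots,\Lambda^{n-1}V)$ is $\pi(s_{\theta(\lambda)})$, which is the class of $V_\lambda$, and the single induction on $d(\lambda)$ finishes the proof. Your bookkeeping of which $e_k$ occur in the dual Jacobi--Trudi determinant (all indices at most $n-1$) is also correct.

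Compared with the paper: the paper never iterates Dvir's formula and never needs any homomorphism statement. It runs a double induction (on $d(\lambda)$, and then on $d(\lambda)-\theta(\lambda)'_1$) and at each step multiplies a single already-known representation $V_\mu$, with $\theta(\mu)=\theta(\lambda)^{\circ}$, by one exterior power $\Lambda^{\theta(\lambda)'_1}V$; a single application of Dvir plus the Pieri rule shows this product equals $V_\lambda$ modulo terms handled by one of the two inductions. Your route is more conceptual --- it identifies $\gr R_n$ as a quotient of the symmetric function ring generated by the images of the $e_k$, and then the theorem is immediate from freeness of $\Z[e_1,\dots]$ --- at the price of having to prove the truncation lemma above; the paper's route is more hands-on but uses Dvir only in the two-factor form in which it is stated.
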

\begin{proof}
Recall that
$\Lambda^k V = V_{[n-k,1^k]}$, and notice that $\theta([n-k,1^k]) = [1^k]$.
Let $Q$ denote the subring of $R(\mathfrak{S}_n)$
generated by the $\Lambda^k V$. We prove that $V_{\la} \in
Q$ for all partition $\la$ of $n$ ($\la \vdash n$),
by induction on $d(\la) = |\theta(\la)|$. We have $d(\la) = 0
\Rightarrow \la = [n]  \Rightarrow V_{\la} = \Lambda^0 V$
and $d(\la) = 0
\Rightarrow \la = [n-1,1]  \Rightarrow V_{\la} = \Lambda^1 V$,
hence $V_{\la} \in Q$ if $d(\la) \leq 1$. We thus assume
$d(\la) \geq 2$.

We need some more notation. For a partition $\alpha$ of $m$,
we define the partition $\alpha^{\circ}$
by $\alpha^{\circ}_i = \max(0,\alpha_i - 1)$.
Clearly, $|\alpha^{\circ}| = |\alpha| - \alpha'_1$ 
hence $|\alpha^{\circ}|\leq |\alpha|$, with equality only
if $\alpha = 0$.

We let $\alpha = \theta(\la)$ and use another induction on $d(\la) - \alpha'_1 = |\alpha| - \alpha'_1$.
The case $d(\la) - \alpha'_1 = 0$ means $V_{\la} = \Lambda^{\alpha'_1} V
\in Q$, so we can assume $d(\la) - \alpha'_1 \geq 1$.

We let $r = |\alpha| - |\alpha^{\circ}|$.
Since $d(\la) \geq 2$ we have $\theta(\la) \neq 0$ and $r \geq 1$.
Moreover $\la_1 = n- |\alpha| \geq 0$, hence $n - |\alpha^{\circ}| \geq 0$.
We thus can introduce $\mu = [n-|\alpha^{\circ}|,\alpha^{\circ}_1,
\dots,]$ and
consider $M = V_{\mu} \otimes \Lambda^r V \in R(\mathfrak{S}_n)$.
Since $|\alpha^{\circ}| < |\alpha|$ we have $d(\mu) < d(\la)$
hence $V_{\mu} \in Q$ and $M \in Q$. Let $\nu \vdash n$
such that $V_{\nu} \into M$. If $d(\nu) < d(\mu) + r = d(\la)$
then $V_{\nu} \in Q$ by the first induction hypothesis.
Otherwise, $\nu_1 = n-|\alpha| = \la_1$, and
$C_{\mu,[n-r,1-r],\nu} = L_{\alpha^{\circ},[1^r],\theta(\nu)}$
by Dvir formula.
By the Littlewood-Richardon rule, we have
$L_{\alpha^{\circ},[1^r],\alpha} = 1$ and,
if $L_{\alpha^{\circ},[1^r],\theta(\nu)}$
is nonzero, then either $\theta(\nu)'_1 > \alpha'_1$,
in which case we know that $V_{\nu} \in Q$ by the second
induction hypothesis (as $\nu'_1 = 1+\theta(\nu)'_1$), 
or $\theta(\nu) = \alpha$. Hence $M \equiv V_{\la}$ modulo $Q$,
$V_{\la} \in Q$ and the conclusion follows by induction.
\end{proof}

\subsection{Proof of theorem \ref{theoSN}}

The above results are sufficient to give a proof of theorem \ref{theoSN}, as
sketched in the introduction. If $\rho$ is a representation
of $\mathfrak{S}_n$, it can be split as a direct sum $\rho_1 \oplus \dots \oplus
\rho_r$ of irreducible representations. As $\mathcal{V}(\rho)$ is the
Whitney sum of the $\mathcal{V}(\rho_i)$, we can assume that $\rho$
is irreducible. 

If $\rho$ has dimension 1, the reflection representation $V$, or
some alternating power of $V$,
then $\mathcal{V}(\rho)$ is trivial by propositions \ref{propdim1},
\ref{proptrivref} and \ref{propbase} (3).
In general, $\Phi(\rho)$ is a polynomial in the
$\Phi(\rho_0)$, for $\rho_0$ in the previous list,
by theorem \ref{theoGenSn}. It follows that $\Phi(\rho)$
belongs to the subring of trivial bundles. As $\C_*^n /\mathfrak{S}_n$
has the homotopy type of a $(n-1)$-dimensional
CW-complex, if $\dim \rho_0 \geq (n-1)/2$, this implies that
$\mathcal{V}(\rho)$ is trivial by \ref{propbase} (1). But this is
the case for all irreducible $\rho$
with $\dim \rho > 1$ by proposition \ref{propdimn2}. Since the case $\dim \rho = 1$ has been
dealt with separately, this concludes the proof of the theorem.

\section{Groups of small rank}

We let again $W < \GL(V)$ denote an arbitrary
irreducible (pseudo-)reflection group. In this section we
prove theorem \ref{theoW}. The proof is the same as for theorem \ref{theoSN}, as soon as
we know that $R(W)$ is generated by a collection of
representations $\rho$ for which $\mathcal{V}(\rho)$ is trivial.
This is the goal of the propositions proved below, which concludes
the proof of the theorem.

\subsection{Groups of rank 2}

The goal of this section is to prove theorem \ref{theoW} for the groups of
rank 2, that is $\dim V  = 2$.
We first assume that $W$ has type $G(de,e,2)$. For
the classical facts on this group used in the proof,
we refer to \cite{ARIKIKOIKE}.

\begin{lemma} If $W = G(de,e,2)$, then $R(W)$ is generated by $V$ and the 1-dimensional representations.
\end{lemma}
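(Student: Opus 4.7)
The plan is to apply Clifford theory to the abelian normal subgroup $D \subset W$ consisting of the diagonal matrices, which has index $2$, so that every irreducible representation of $W$ has dimension $1$ or $2$. The $2$-dimensional irreducibles are of the form $\rho_{a,b} = \mathrm{Ind}_D^W \chi_{a,b}$ where $\chi_{a,b}(\mathrm{diag}(\zeta_1,\zeta_2)) = \zeta_1^a \zeta_2^b$ and the orbit $\{\chi_{a,b}, \chi_{b,a}\}$ under $W/D \cong \mathfrak{S}_2$ has size $2$. Since every $1$-dimensional representation trivially lies in the subring $Q \subset R(W)$ generated by $V$ and the linear characters, it suffices to show $\rho_{a,b} \in Q$ for every such pair.

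Writing $e_1 = \zeta_1 + \zeta_2$ and $e_2 = \zeta_1 \zeta_2$, the character of $V$ restricted to $D$ is $e_1$, and $\det V$ is a linear character of $W$ with $\chi_{\det V}|_D = e_2$. Since $D$ is normal, the character of $\rho_{a,b}$ vanishes on $W \setminus D$, and on $D$ it equals $\zeta_1^a \zeta_2^b + \zeta_1^b \zeta_2^a = e_2^{\min(a,b)} p_{|a-b|}$, where $p_k = \zeta_1^k + \zeta_2^k$. By Newton's recurrence $p_k = e_1 p_{k-1} - e_2 p_{k-2}$ (with $p_0 = 2$, $p_1 = e_1$), this is a $\Z$-polynomial $F(e_1, e_2)$, so substituting $V$ for $e_1$ and $\det V$ for $e_2$ gives an element $F(V, \det V) \in Q$ whose character agrees with that of $\rho_{a,b}$ on $D$.

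To correct the off-diagonal values, observe that $\chi_V$ vanishes on $W \setminus D$, so $\chi_{V^k}$ also vanishes there for $k \geq 1$, and only the constant-in-$e_1$ part of $F$ contributes off $D$. An induction on $k$ from Newton's recurrence shows that this part of $p_k$ equals $0$ for $k$ odd and $(-1)^{k/2} \cdot 2 \cdot e_2^{k/2}$ for $k$ even---crucially divisible by $2$. Let $\epsilon: W \to W/D \cong \mathfrak{S}_2 \to \{\pm 1\}$ denote the sign character, a linear character of $W$ lying in $Q$. The virtual representation $(\det V)^j(1 - \epsilon) \in Q$ has character vanishing on $D$ and evaluating to $2\det(g)^j$ on any $g \in W \setminus D$. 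Subtracting the appropriate integer multiple of $(\det V)^{(a+b)/2}(1-\epsilon)$ from $F(V, \det V)$ cancels the off-diagonal mismatch, yielding $\rho_{a,b}$ as an element of $Q$.

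The main obstacle is precisely the parity statement that the constant-in-$e_1$ coefficients of $p_k$ are always even when nonzero; this is what enables the $\epsilon$-twist correction to be carried out with integer coefficients, and it follows directly from Newton's recurrence.
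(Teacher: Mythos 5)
Your argument is correct, but it follows a genuinely different route from the paper. The paper first reduces to $e=1$ by invoking the fact (from the Ariki--Koike theory of $G(de,e,r)$) that every $2$-dimensional irreducible of $G(de,e,2)$ extends to $G(de,1,2)$, then works with explicit matrix models $(i,j)$ of the $2$-dimensional representations and runs an induction on $k$ using the decompositions $(0,1)\otimes(0,k)=(0,k+1)+(1,k)$ together with twisting by the linear character $\chi_1$. You instead stay inside $G(de,e,2)$ and argue by Clifford theory for the index-$2$ diagonal subgroup $D$: every $2$-dimensional irreducible is $\mathrm{Ind}_D^W\chi_{a,b}$, its character vanishes off $D$ and equals $e_2^{\min(a,b)}p_{|a-b|}$ on $D$, so Newton's identities express it as an integer polynomial in $\chi_V|_D=e_1$ and $\chi_{\det V}|_D=e_2$; the only mismatch off $D$ comes from the $e_1$-free part of that polynomial, which you correctly compute to be $0$ or $\pm 2e_2^{(a+b)/2}$, and which is killed by subtracting $(-1)^{|a-b|/2}(\det V)^{(a+b)/2}(1-\epsilon)$, an integer combination of linear characters; since characters determine elements of $R(W)$, this puts each $2$-dimensional irreducible in the subring $Q$. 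The small standard facts you use (induced characters from an index-$2$ subgroup vanish off the subgroup; a $2$-dimensional irreducible cannot lie over a $W$-stable character of $D$ since $W/D$ is cyclic) are all sound. What your approach buys is a uniform, closed-form expression valid for all $e$ at once, bypassing the extension-to-$G(de,1,2)$ step; what the paper's approach buys is an argument in the same spirit as its ``main algorithm'' (building new irreducibles one at a time from tensor products), requiring only the branching/extension fact and elementary $2\times 2$ computations.
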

\begin{proof}
The irreducible representations of $W$ have dimension at most 2. The ones
of dimension 2 can be extended to $G(de,1,2)$, so we can assume
without loss of generality that $e=1$.
The group $W$ is generated by $t = \mathrm{diag}(1, \zeta)$ with $\zeta = \exp(2 \ii \pi/d)$ and $s$ the
permutation matrix $(1\ 2)$. Its two-dimensional
representations are indexed by couples $(i,j)$ with $0 \leq i < j < d$.
We extend this notation to $i,j \in \Z$ with $j \not\equiv i \mod d$ by taking representatives modulo
$d$ and letting $(i,j) = (j,i)$.
 A matrix model for the images of $t$ and $s$ in the
representation $(r,r+k)$  is
$$
t \mapsto \begin{pmatrix} \zeta^r & 0 \\ 0 & \zeta^{r+k} \end{pmatrix}\ \ 
s \mapsto \begin{pmatrix} 0 & 1 \\ 1 & 0 \end{pmatrix}
$$
In particular, $V = (0,1)$. From these explicit models it is straightforward to check that
$(0,1) \otimes (0,1)$ is the sum of $(0,2)$ and 1-dimensional representations,
and that $(0,1) \otimes (0,k) = (0,k+1) + (1,k)$.
Then we consider the 1-dimensional representation $\chi_1 : t \mapsto \zeta, s \mapsto 1$.
It is clear that $(i,j) \otimes \chi_1 = (i+1,j+1)$. Letting $Q$ denote
the subring of $R(W)$ generated by $V$ and the 1-dimensional representations,
through tensoring by $\chi_1$ is it enough to show that $(0,k) \in Q$ for all
$1 \leq k \leq d$. By definition $(0,1) \in Q$, tensoring by $(0,1)$
yields $(0,2) \in Q$, and finally $(0,1) \otimes (0,k) = (0,k+1) + \chi_1 \otimes (0,k-1)$
proves the result by induction on $k$.
\end{proof}

In the Shephard-Todd classification, exceptional irreducible
reflection groups of rank 2 are labelled $G_n$ for $4 \leq n \leq 22$.
For these exceptional groups, we checked from the character tables that $R(W)$
is generated by $V$ and the 1-dimensional representations,
using the following naive algorithm : start with $V$ and the
1-dimensional representations, decompose all possible tensor
products, isolate the ones which are the sum of one new irreducible
representation (with multiplicity 1) and older ones, add them
to the starting list, and repeat the process until all
irreducible representations have been obtained. In the sequel, we call
this algorithm the \emph{main algorithm}.

Together with the previous lemma, this proves the following.

\begin{prop} If $W$ is an irreducible rank 2 reflection group
then $R(W)$ is generated by $V$ and 1-dimensional representations.
\end{prop}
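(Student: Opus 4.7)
The plan is to combine the Shephard-Todd classification of irreducible rank 2 reflection groups with the two ingredients already established in the subsection. By Shephard-Todd, an irreducible rank 2 reflection group $W$ falls into one of two classes: either it belongs to the infinite series $G(de,e,2)$, or it is one of the exceptional groups $G_4, G_5, \dots, G_{22}$. Thus the proof is just a matter of assembling the two cases.

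For the infinite series $W = G(de,e,2)$, the conclusion is exactly the content of the lemma proved just above: $R(W)$ is generated by the reflection representation $V$ together with the one-dimensional characters. So this case requires no further work.

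For the exceptional groups, the claim is exactly what the main algorithm described just before the proposition is designed to verify. Concretely, starting from the set consisting of $V$ and all linear characters of $W$, one iteratively forms tensor products, decomposes them using the character table of $W$, and adds to the list any irreducible constituent that appears with multiplicity one in an otherwise already-known sum. Since there are finitely many exceptional rank 2 reflection groups (the nineteen groups $G_4,\dots,G_{22}$) and each has a small character table, the algorithm terminates after finitely many steps and, by direct case-by-case verification, exhausts $\Irr(W)$ in every case.

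The main obstacle here is purely computational rather than conceptual: one must actually run the algorithm against each of the nineteen character tables. The author handles this by appeal to the case-by-case check announced in the paragraph preceding the proposition. Putting the infinite-series lemma and the exceptional-group verification together immediately yields the proposition.
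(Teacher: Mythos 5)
Your proposal is correct and follows exactly the paper's own route: the lemma handles the infinite series $G(de,e,2)$, and the exceptional groups $G_4,\dots,G_{22}$ are settled by the character-table check via the main algorithm, the combination giving the proposition. Nothing essential is missing.
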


\subsection{Exceptional groups of higher rank}

Investigating the character tables of higher rang exceptional groups
by using the main algorithm yields the
following.

\begin{lemma} If $W$ has type $G_{24}$, $G_{25}, G_{26},G_{33}, G_{35}=E_6$,
then $R(W)$ is generated by the 1-dimensional
representations and the alternating powers $\Lambda^k V$
of the reflection representation.
\end{lemma}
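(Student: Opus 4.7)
The plan is to prove the lemma group-by-group by running the \emph{main algorithm} introduced in the previous subsection. Concretely, for each $W \in \{G_{24}, G_{25}, G_{26}, G_{33}, G_{35}\}$, I would initialize a working set $S \subset R(W)$ containing the $1$-dimensional characters of $W$ together with the alternating powers $\Lambda^k V$ for $0 \leq k \leq \dim V$. I would then iteratively decompose tensor products $\chi \otimes \chi'$ of elements already shown to lie in the subring $Q$ generated by $S$; whenever such a decomposition takes the form $\chi \otimes \chi' = V_\la + \sum_\mu n_\mu V_\mu$ with all $V_\mu \in Q$, the new irreducible $V_\la$ is appended to $S$. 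The lemma is established as soon as the iteration produces every element of $\Irr(W)$.

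The computation rests entirely on the character tables of these groups, which are standard and available through CHEVIE/GAP. Given the character of the reflection representation $V$, the characters of $\Lambda^k V$ are computed by evaluating the $k$-th elementary symmetric polynomial in the eigenvalues of each conjugacy class representative. Tensor products of characters are obtained by pointwise multiplication, and their decomposition into irreducibles is read off via Schur orthogonality. Iterating these two operations is a purely mechanical process, and the termination criterion is simply that $|S| = |\Irr(W)|$.

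The main obstacle is combinatorial rather than conceptual: each of the five groups has up to several dozen irreducible characters (with $G_{26}$ and $G_{33}$ being the largest in this list), so one must verify that the algorithm actually exhausts $\Irr(W)$ rather than stalling before all irreducibles are reached. Should the naive version stall on some $V_\la$, one can still try to extract it from a $\Z$-linear combination of several tensor-product decompositions, using orthogonality to solve the corresponding linear system on class functions. I expect, and would verify in each of the five cases, that this refinement is not needed and that the main algorithm terminates with $S = \Irr(W)$, exactly as the analogous checks performed in the rank-$2$ proposition.
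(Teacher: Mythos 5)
Your proposal is correct and is essentially the paper's own proof: the author establishes this lemma precisely by running the \emph{main algorithm} on the character tables (via GAP3/CHEVIE), seeded with the $1$-dimensional characters and the $\Lambda^k V$, and verifying that it exhausts $\Irr(W)$ for each of $G_{24}$, $G_{25}$, $G_{26}$, $G_{33}$, $G_{35}=E_6$. Your fallback of solving for an irreducible as a $\Z$-linear combination of decompositions is the kind of refinement the paper only needs for other groups (e.g. $H_4$ and $B_4$, via LLL), not for these five.
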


The case of $G_{25}$ is specially interesting because, like
the rank 2 exceptional groups $G_4,G_8,G_{16}$, its braid
group is an ordinary braid group (here on 4 strands). The remaining
one sharing this property is $G_{32}$. For it, we need an additional trivialization.

\medskip

We consider first $W_0 \simeq \mathfrak{S}_5$ of type Coxeter $A_4$, and
the corresponding spaces $X_0$, $X_0/W_0$. We consider the
5-dimensional representation $\rho_0 : W_0 \to \GL(U)$ labelled by the partition $[3,2]$,
and introduce the 1-parameter familly of KZ-connections on $X\times U \to X$
defined by the integrable 1-form $\om = h \sum_s (\mathrm{Id}_U - \rho_0(s))
\om_s$, choose a basepoint $\underline{z} \in X_0$ and denote
$\rho_h : B_0 \to \GL(U)$ the corresponding representation of
the braid group on 5 strands $B_0 = \pi_1(X_0/W_0)$.
We denote $\sigma_1,\dots,
\sigma_4 \in B_0$ the braided reflections which are the usual Artin
generators of $B_0$. Recall that the $\sigma_i$ lie in the same
conjugacy class, and that transpositions act through $\rho_0$ on $U$
by eigenvalues $1,1,1,-1,-1$. For a formal value of the parameter $h$,
it follows that $\rho_h(\sigma_i)$ is diagonalizable with eigenvalues
$1,1,1,-e^{2 \ii \pi h},-e^{2 \ii \pi h}$ (see e.g. \cite{KRAMCRG}
proposition 2.3). In particular $\rho_h(\sigma_i)$
is annihilated by $(X-1)(X+e^{2 \ii \pi h})$ for all $h \in \C$,
hence is semisimple for $2h -1 \not\in 2 \Z$. Moreover its spectrum
is determined by the collection $\tr \rho_h(\sigma_i^k)$
for all $k$. Since $\tr \rho_h(\sigma_i^k) = 3 - 2 e^{2 \ii \pi h k}$
in $\C((h))$, the equality also holds for $h \in \C$.

In particular, for $h = 5/6$ we get that $\rho_h(\sigma_i)$
has eigenvalues $1,1,1,j,j$ with $j = \exp(2 \ii \pi/3)$.
The group $G_{32}$ is a quotient of $B_0$ by the relation $\sigma_1^3 =1$,
hence $\rho_{5/6}$ factorizes through $G_{32}$. Since $B_0$ is isomorphic
to the braid group $B$ of $W = G_{32}$ (see \cite{BMR}),
and $X/W$ as well as $X_0/W_0$ are $K(\pi,1)$,
it follows $X/W$ and $X_0/W_0$ are homotopically equivalent.
By theorem \ref{theoSN}, the vector bundle $\mathcal{V}(\rho_0)$
is trivial over $X_0/W_0$ hence over $X/W$. By proposition
\ref{propbase} (2) it follows that $\mathcal{V}(\rho_{5/6})$
is also trivial over $X/W$.

\medskip

In order to use the character table, we now identify $\rho_{5/6}$ among the representations of $G_{32}$,
and denote $\chi_h = \tr \rho_h$. We know $\chi_{5/6}(1) = 5$
and $\chi_{5/6}(s) = 3+2j = 2 + \sqrt{-3}$ for $s$ a distinguished reflection.
Let now $\beta = (\sigma_1 \sigma_2
\sigma_3 \sigma_4)^5 \in Z(B)$, which corresponds to the
loop $t \mapsto \underline{z} e^{2 \ii \pi t}$. We have $\rho_{h}(\beta)
= \exp(2\ii\pi h \sum_s (\Id + \rho(s)))$ where the
sum runs over all transpositions $s \in \mathfrak{S}_5$. Since the
sum of these transpositions is central in $Z( \C \mathfrak{S}_5)$,
one readily gets $\sum_s (\Id + \rho(s))) = 8 \Id$ and
$\rho_{5/6}(\beta) = j^2$. The irreducible characters of $G_{32}$
of degree at most 5 are either 5-dimensional, 1-dimensional
given by $\sigma_i \mapsto j^k$, or 4-dimensional. The value of these
4-dimensional characters on the distinguished reflections of $G_{32}$
belong to $\{ -1, -j, -j^2 \}$. The only possibility is thus
that $\rho_{5/6}$ is irreducible, and there is only one 5-dimensional
irreducible character satisfying the above conditions (labelled $\phi_{5,4}$
in the computer system GAP3/CHEVIE).

Launching the main algorithm yields

\begin{prop} For $W$ of type $G_{32}$, $R(W)$ is generated
by the 1-dimensional representations, $V$ and $\rho_{5/6}$.
\end{prop}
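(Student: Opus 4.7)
The plan is to apply the main algorithm described earlier in the section to the character table of $G_{32}$, starting from the list consisting of the 1-dimensional representations, the reflection representation $V$, and the 5-dimensional representation $\rho_{5/6}$. The non-routine work has been done in the paragraphs immediately preceding the statement: they established that $\rho_{5/6}$ factors through $B \onto G_{32}$ and pinned it down as $\phi_{5,4}$ by combining its character value on braided reflections (namely $2 + \sqrt{-3}$) with its value $j^2$ on the central element $\beta$. Thus $\rho_{5/6}$ is a well-identified element of $R(G_{32})$, and proving the proposition becomes a finite ring-theoretic verification inside the known character ring.

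Concretely, I would load the character table of $G_{32}$ from GAP3/CHEVIE, initialize the list with the linear characters, with $V$, and with $\phi_{5,4}$, and then iterate: for every pair of representations already in the list, decompose their tensor product as a sum of irreducibles using the character table, and adjoin to the list any irreducible appearing with multiplicity one together only with representations already present. The process terminates when no new irreducible is produced, and one checks that this terminal list exhausts all irreducible characters of $G_{32}$. Since tensor product is commutative and the algorithm is monotone, termination is automatic; only the success of the enumeration is in question.

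The main (and essentially only) obstacle is the sheer size of the computation, since $G_{32}$ has 102 irreducible characters; there is no conceptual subtlety beyond trust in the computer algebra system. It is worth emphasizing \emph{why} $\rho_{5/6}$ has to be adjoined to the starting list: the same algorithm initialized only with $V$ and the 1-dimensional characters fails to generate $R(W)$, which is precisely the reason the KZ-monodromy trivialization of $\rho_{5/6}$ via the $\mathfrak{S}_5$-equivalence of braid groups was needed in the first place. Once $\rho_{5/6}$ is included, the algorithm completes, yielding the proposition.
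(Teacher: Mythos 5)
Your proposal matches the paper's proof: the paper also identifies $\rho_{5/6}$ with the character $\phi_{5,4}$ in the preceding paragraphs and then simply launches the main algorithm on the 1-dimensional representations, $V$ and $\rho_{5/6}$, the proposition being the recorded outcome of that finite character-table computation. No difference in substance.
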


We now let $W = G_{23} = H_3$. The representation
$U = S^2 V - \un$ is irreducible, and $S^4 V = \un + 2 U + X$,
with $X$ an irreducible 4-dimensional representation.
Using the main algorithm we check
\begin{prop} For $W = G_{23} = H_3$, $R(W)$ is generated by the 1-dimensional
representations, $V$ and $X$.
\end{prop}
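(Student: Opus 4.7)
The plan is to verify the statement by the main algorithm: starting from the generating set $\{\un, \mathrm{sign}, V, X\}$, I iteratively form tensor products among the elements already known to lie in the subring $Q \subset R(W)$ they generate, decompose each against the character table of $H_3$, and whenever the decomposition consists of previously absorbed irreducibles together with one new irreducible of multiplicity one, absorb that new irreducible into $Q$. Recall that $H_3 \cong A_5 \times \Z/2$ has ten irreducible complex characters, of dimensions $1,1,3,3,3,3,4,4,5,5$, so the aim is to see the process recover all ten.

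The setup is justified by short character inner-product computations: $S^2 V$ decomposes as $\un + U$ with $U$ a new irreducible of dimension $5$, placing $U$ into $Q$; and $S^4 V = \un + 2U + X$ with $X$ irreducible of dimension $4$, as stated. Since $V$ is self-dual in $H_3$ with $\det V = \mathrm{sign}$, we have $V \otimes V = \un \oplus U \oplus \Lambda^2 V$ where $\Lambda^2 V \cong V \otimes \mathrm{sign}$ is already in $Q$. Tensoring the irreducibles obtained so far with $\mathrm{sign}$ yields $U \otimes \mathrm{sign}$ and $X \otimes \mathrm{sign}$, so two further irreducibles enter $Q$ immediately.

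The only remaining irreducibles are the Galois-conjugate three-dimensional representation $V'$ (whose character is obtained from that of $V$ by the nontrivial automorphism of $\Q(\sqrt{5})$) and its sign-twist. To produce these, I would compute tensor products such as $V \otimes X$, $X \otimes X$, and $X \otimes U$ and decompose them using the character table; because $V$ is faithful, sufficiently high tensor powers of $V$ contain every irreducible, so $V'$ must show up in one of these low-degree products, and at each step one verifies that a single new irreducible appears with multiplicity one beside already-absorbed components.

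The principal obstacle is not conceptual but combinatorial: one must confirm that the algorithm actually produces $V'$ and $V' \otimes \mathrm{sign}$ without ambiguity and with the multiplicity-one condition satisfied at each stage. This reduces to a finite, mechanical inspection of the character table of $H_3$, of exactly the same flavour as the verifications the author carries out for the exceptional groups $G_{24}, G_{25}, G_{26}, G_{33}, G_{35}$ in the preceding lemma.
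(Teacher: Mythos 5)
Your proposal is correct and follows essentially the same route as the paper: the paper defines $X$ via $S^2V = \un + U$ and $S^4V = \un + 2U + X$ and then simply runs the main algorithm on the character table of $H_3$, exactly the mechanical verification you describe. The check does go through as you anticipate (e.g.\ $V \otimes X$ decomposes as $V'$ plus sign-twists of $X$ and $U$ already in $Q$, so the Galois-conjugate $V'$ and then $V'\otimes\mathrm{sign}$ are absorbed with multiplicity one), so your identification of $H_3 \cong \mathfrak{A}_5 \times \Z/2$ and of the missing irreducibles is a harmless refinement, not a different argument.
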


We let $W = G_{30} = H_4$. Launching the main algorithm on the 1-dimensional
representations and the $\Lambda^k V$ provides 25 irreducible
representations (among the 34 existing ones) inside the subalgebra $Q$
of $R(W)$ that they generate. We then consider the representations
$\mathcal{S}_{\la}(V)$ for $\la$ a partition of size at most 8, and consider
the submodule spanned by their characters and the ones of these
25 representations. Launching the LLL algorithm as implemented in
the computer system GAP3 on these characters
shows that this submodule actually contains all the 34 irreducible
representations of $W$. This proves the following.

\begin{prop} If $W$ has type  $G_{30} = H_4$,
then $R(W)$ is generated by the 1-dimensional
representations and the representations $\mathcal{S}_{\la}(V)$, for $|\la| \leq 8$.
\end{prop}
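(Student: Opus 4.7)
The plan is to emulate the strategy successfully used for $G_{24}$, $G_{25}$, $G_{26}$, $G_{33}$ and $G_{35}$, relying on what is called the \emph{main algorithm}: start with an initial list $\mathcal{L}$ of irreducibles contained in the target subring $Q \subset R(W)$, iteratively decompose all tensor products $\rho \otimes \sigma$ for $\rho, \sigma \in \mathcal{L}$, and whenever such a decomposition is of the form $\rho \otimes \sigma = \tau + \sum_i n_i \tau_i$ with $\tau$ an irreducible not already in $\mathcal{L}$ and all $\tau_i \in \mathcal{L}$, add $\tau$ to $\mathcal{L}$. Iterate until saturation. Since $Q$ is a subring of $R(W)$ and $\mathcal{L} \subset Q$ at every stage, everything obtained this way lies in $Q$.

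First I would apply the algorithm with $\mathcal{L}$ consisting of the $1$-dimensional characters of $W = H_4$ together with the alternating powers $\Lambda^k V = \mathcal{S}_{[1^k]} V$ for $0 \leq k \leq 4$. The necessary character values are read off the GAP3/CHEVIE table of $H_4$ (a group of order $14400$ with $34$ irreducibles). By analogy with the neighbouring exceptional cases I would expect the algorithm to saturate short of all of $\mathrm{Irr}(W)$, catching only a proper subset of it.

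To reach the remaining irreducibles, I would enlarge the generating set by adjoining the Schur-functor images $\mathcal{S}_\la V$ for all partitions $\la$ with $|\la| \leq 8$. Rather than continuing to hunt for multiplicity-one occurrences, I would switch to a purely linear attack: form the $\Z$-submodule of $R(W)$ spanned by the irreducibles already obtained together with the characters of all such $\mathcal{S}_\la V$, and try to express each of the $34$ irreducible basis vectors of $R(W)$ as an integer linear combination of these generators. Because this family is highly redundant, the LLL short-vector routine as implemented in GAP3 is the natural tool: it produces small integral relations efficiently, whereas naive Hermite normal form would yield unmanageably large coefficients. If every irreducible appears in the $\Z$-span, the proposition follows, since each generator of the $\Z$-span lies in the subring generated by the $1$-dimensional representations and by the $\mathcal{S}_\la V$ with $|\la| \leq 8$.

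The main obstacle is unambiguously computational rather than conceptual. There is no a priori reason for the bound $|\la| \leq 8$ to suffice, and one must be prepared to raise it until the $\Z$-submodule attains the full rank $34$. This is, among all cases treated in the paper, the one furthest from a structural proof: one is essentially certifying by machine that Schur functors of bounded degree are enough, without any obvious representation-theoretic shortcut (such as Dvir's formula in the symmetric-group case) that would make the statement transparent for $H_4$.
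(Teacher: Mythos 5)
Your proposal follows essentially the same route as the paper: the main algorithm applied to the $1$-dimensional representations and the $\Lambda^k V$ (which in fact yields $25$ of the $34$ irreducibles), followed by adjoining the characters of the $\mathcal{S}_{\la}(V)$ with $|\la|\leq 8$ and using the LLL routine in GAP3 to verify that the resulting $\Z$-submodule contains all $34$ irreducible characters. One small caution: in your closing remark, what must be certified is membership of each irreducible in the $\Z$-span (as you correctly state earlier), not merely that the span has rank $34$ over $\Q$, since a full-rank sublattice need not contain the irreducible basis.
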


\subsection{Groups of Coxeter type $B$}

We investigate the types $B_n$ for $n$ small, as a good example
of complications arising already for close relatives of
the symmetric group. The rank 2 groups having been dealt with
before, we start with $n = 3$. In that case it is easily checked that $R(W)$
is generated by the 1-dimensional representations and the $\Lambda^k V$,
for instance by using the main algorithm.

This is not true anymore for $n =4$. In that case, it can been checked
that the subring generated by these representations has dimension
over $\Q$ equal to 18, to be compared to the 20 irreducible representations
of $W$. A fortiori, $R(W)$ is not generated over $\Z$ by these
representations.

We nevertheless show that $\mathcal{V}(\rho)$ is a trivial bundle
for all representations $\rho$ of $B_4$. For this, we first
launch the main algorithm on the 1-dimensional representations
and the $\Lambda^k V$. We get 10 irreducible representations
(among 20). The submodule $M$ spanned by them and the $S^k V$
for $2 \leq k \leq 8$ contains 4 additional ones, found by the LLL
algorithm. We did not succeed in improving this bound using other algebraic
relations and Schur functors. However, we can use a result from
\cite{ARRREFL} saying that the permutation representation of $W$ associated
to its action by conjugation on the reflections, denoted $R_0$ in
\cite{ARRREFL}, is connected
by a path in the representation variety of $B$ to another representation
$R_1$. It follows that $\Phi(R_0) = \Phi(R_1)$. By computing
the characters we get $R_1 \in M$, and we find that $R(W)$
is spanned by $M$ and $R_1 - R_0$, hence

\begin{prop} If $W$ is of type $B_n$, $n \leq 4$, then
$\Phi(\rho)$ lies inside the subgroup of trivial bundles
for any representation $\rho$ of $W$.
\end{prop}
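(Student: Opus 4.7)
The plan is to handle the three cases $B_2$, $B_3$, $B_4$ separately, since the difficulty grows with $n$.

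The case $n=2$ is already covered by the preceding section: $W = B_2 = G(2,1,2)$ is a rank $2$ reflection group, so its representation ring is generated by $V$ and $1$-dimensional representations, whose KZ-bundles are trivial by Propositions \ref{proptrivref} and \ref{propdim1}, and $\Phi$ is a ring homomorphism.

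For $n=3$ I would run the main algorithm starting from the $1$-dimensional representations and the exterior powers $\Lambda^k V$ ($0 \le k \le 3$). A direct inspection of the character table of $B_3$ confirms that these already generate $R(W)$ as a ring. Since each generator maps under $\Phi$ into the subring of trivial bundles (by Propositions \ref{propdim1}, \ref{proptrivref} and \ref{propbase}(3)), the image of the whole ring $R(W)$ sits inside that subring.

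The case $n=4$ is genuinely more delicate, because the subring generated by the $1$-dimensional representations and the $\Lambda^k V$ has $\Q$-rank only $18$, whereas $R(W)$ has $\Q$-rank $20$. The strategy is to build a $\Z$-submodule $M \subset R(W)$, all of whose elements map under $\Phi$ into the subring of trivial bundles, and then to plug the residual gap by a deformation argument. Concretely, I would (i) apply the main algorithm to the $1$-dimensional representations and the $\Lambda^k V$, extracting $10$ explicit irreducible characters inside $M$; (ii) enlarge $M$ by the symmetric-power classes $S^k V = \mathcal{S}_{[k]} V$ for $2 \le k \le 8$, which also map into the subring of trivial bundles by Propositions \ref{proptrivref} and \ref{propbase}(3); and (iii) run LLL on the characters contained in $M$ to recognize $4$ additional irreducibles sitting inside it.

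To recover the still-missing classes, I would then invoke the observation from \cite{ARRREFL} that the permutation representation $R_0$ of $B$ associated to the conjugation action of $W$ on its reflections can be joined by a continuous path in $\Hom(B,\GL_N(\C))$ to another representation $R_1$. Proposition \ref{propbase}(2) then yields $\Phi^+(R_0) = \Phi^+(R_1)$, so that $R_1 - R_0$ maps to zero in $K(Y)$. A direct character computation should show $M + \Z(R_1 - R_0) = R(W)$, at which point we are done: any $\rho \in R(W)$ can be written as $m + k(R_1 - R_0)$ with $m \in M$ and $k \in \Z$, and then $\Phi(\rho) = \Phi(m) + k(\Phi(R_1) - \Phi(R_0)) = \Phi(m)$ lies in the subring of trivial bundles. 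The hard part is precisely this last verification, a finite but nontrivial linear-algebra check that depends on having the full character table of $W = B_4$ and the explicit character of $R_1$ in hand.
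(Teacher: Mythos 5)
Your proposal is correct and follows essentially the same route as the paper: the main algorithm on the $1$-dimensional representations and the $\Lambda^k V$, enlargement of the submodule by the $S^k V$ ($2 \le k \le 8$) with LLL, and the deformation $R_0 \rightsquigarrow R_1$ from \cite{ARRREFL} combined with Proposition \ref{propbase}(2) to absorb the remaining classes. The only cosmetic difference is that you use $\Phi(R_1 - R_0) = 0$ directly, whereas the paper additionally records that $R_1 \in M$; the two formulations are equivalent since $M + \Z(R_1 - R_0) = M + \Z R_0$ in that case.
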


\end{document}